\newtheorem{thm}[equation]{Theorem}
\newtheorem{cor}[equation]{Corollary}
\newtheorem{lem}[equation]{Lemma}
\newtheorem{rem}[equation]{Remark}
\newcommand{\thmref}[1]{Theorem~\ref{#1}}
\newcommand{\corref}[1]{Corollary~\ref{#1}}
\newcommand{\lemref}[1]{Lemma~\ref{#1}}
\newcommand{\secref}[1]{Section~\ref{#1}}
\numberwithin{equation}{section}
\renewcommand\a{\alpha}
\newcommand\g{\gamma}
\renewcommand\d{\delta}
\newcommand\e{\varepsilon}
\renewcommand\l{\lambda}
\newcommand\G{\Gamma}
\newcommand\f{\frac}
\newcommand\smallf[2]{{\textstyle{\frac{#1}{#2}}}}
\newcommand{\Z}{{\mathbb{Z}}}
\newcommand{\R}{{\mathbb{R}}}
\newcommand{\C}{{\mathbb{C}}}
\newcommand{\Sch}{{\mathcal{S}}}
\renewcommand\Re{\text{Re~}}
\renewcommand\Im{\mbox{Im~}}
\renewcommand\i{^{-1}}
\renewcommand\({\left(}
\renewcommand\){\right)}
\newcommand{\ttwo}[4]{
\(\begin{smallmatrix}{#1} & {#2}
\\ {#3} & {#4} \end{smallmatrix}\)}
\newcommand{\tthree}[9]{\(\begin{smallmatrix}{#1} & {#2} & {#3}
\\ {#4} & {#5} & {#6} \\ {#7} & {#8} & {#9} \end{smallmatrix}\)}
\newcommand{\bx}{\hfill$\square$\vspace{.6cm}}
\newcommand\srel[2]{\begin{smallmatrix} {#1} \\ {#2} \end{smallmatrix}}
\newcommand{\gobble}[1]{}
  \newcommand{\rangeref}[2]{%
    \ref{#1}--\afterassignment\gobble\fam 0\ref{#2}%
  }
\def\sdmnote#1{ }
\def\imod#1{\allowbreak\mkern5mu({\operator@font mod}\,#1)}
\begin{document}

\title{On the nonexistence of automorphic eigenfunctions of exponential growth on $SL(3,\Z)\backslash SL(3,\R)/SO(3,\R)$}

\date{November 2, 2018}

\author{Stephen D. Miller\thanks{Supported by NSF grants DMS-1500562 and DMS-1801417}\\
Rutgers University\\
\tt{miller@math.rutgers.edu}
\\
\\
Tien Trinh\\
Hanoi National University of Education\\
\tt{tientd@hnue.edu.vn}}

\maketitle

\begin{abstract}  It is well-known that there are automorphic eigenfunctions on $SL(2,\Z)\backslash SL(2,\R)/SO(2,\R)$ --- such as the classical $j$-function ---   that have exponential growth  and have exponentially growing Fourier coefficients (e.g., negative powers of $q=e^{2\pi i z}$, or an $I$-Bessel function).  We show that this phenomenon
 does not occur  on the quotient $SL(3,\Z)\backslash SL(3,\R)/SO(3,\R)$ and eigenvalues in general position (a removable technical assumption).

 More precisely, if such an automorphic eigenfunction has at most exponential growth, it cannot have non-decaying Whittaker functions in its Fourier expansion.  This confirms part of a conjecture of Miatello and Wallach, who assert all automorphic eigenfunctions on this quotient (among other rank $\ge  2$ examples) always have moderate growth.  We additionally confirm their conjecture under certain natural hypotheses, such as the absolute convergence of the eigenfunction's Fourier expansion.
\end{abstract}

\section{Introduction}\label{sec:intro}

Although the   $j$-function
\begin{equation}\label{jfunction}
    j(z) \ \ = \ \ e^{-2\pi i z}  \ + \ 744 \ + \ 196884\,e^{2\pi i z} \ + \ \cdots
\end{equation}
is one of the most prominent classical modular forms, it
 is excluded from the modern definition of {\em automorphic form} (see \cite{Borel,Bump-gray}) because it does not satisfy the moderate growth condition of being dominated by a polynomial in $\Im\!\!(z)$ for $\Im\!\!(z)$ large.  Put differently,
 an automorphic form on the upper half plane must be holomorphic at cusps, whereas the $j$-function is merely meromorphic.
 Indeed, the theory of modular forms on the complex upper half plane is replete with many such important examples, examples which have arithmetic significance despite not fitting into the standard representation-theoretic framework.
The exponential growth comes from the presence of nonzero Fourier coefficients for the Fourier modes $e^{2\pi i n z}$, $n<0$.

The situation for nonholomorphic Laplace eigenfunctions on the upper half plane is completely analogous.  For example, weak Maass forms (which do not have moderate growth but instead satisfy an exponential bound) for $SL(2,\Z)$ have Fourier expansions of the form
\begin{equation}\label{weakmaass}
  f(x+iy)= c_{+}y^{1/2+\nu} + c_{-}y^{1/2-\nu} + \sum_{n\in\Z_{\neq 0}}  e^{2\pi i n x} \sqrt{y} (a_n K_\nu(2\pi|n|y)+b_n I_\nu(2\pi|n|y))\,,
\end{equation}
where $\nu$, $c_\pm$, $a_n$, and $b_n$ are complex numbers\footnote{Strictly speaking, formula (\ref{weakmaass}) needs to be slightly adjusted when $\nu=0$, by multiplying the second term  by $\log(y)$.}, $K_\nu$ and $I_\nu$ are the Bessel functions defined in (\ref{IKdef}), and $b_n$ is nonzero for at most finitely many $n$.  Since the $K$-Bessel function decays exponentially and the $I$-Bessel function grows exponentially (see (\ref{IKnuasympt})), the condition that $f(x+iy)$ has moderate growth is equivalent to insisting $b_n=0$ for all $n$.

 The goal of this paper is to show that this prototypical $SL(2)$-phenomenon does not occur for $SL(3)$ (see Theorem~\ref{thm:main} below). Due to a technical limitation we fall slightly short of this, in that we must assume the Satake parameter  $\lambda=(\lambda_1,\lambda_2,\lambda_3)\in \C^3$ (the analog of $\nu$ -- see \secref{sec:Whittaker})
  satisfies
  \begin{equation}\label{generalposition}
  \lambda_i-\lambda_j \ \notin \ 2\Z \, , \ i\,\neq\,j\,.
\end{equation}
 Throughout this paper we make this standing assumption
so that we can quote   results about Whittaker functions.  That assumption is removable, but doing so here is impractical due to the space it would require to develop the theory of Whittaker functions in that context -- see the paragraph after (\ref{Mdef}) for further explanation.

  Our results are special cases of a  conjecture of Miatello and Wallach \cite{MW}, who posit that the   moderate growth condition is automatically satisfied for automorphic eigenfunctions on higher rank groups.\footnote{See \cite[p.~415]{MW} for a precise statement, which includes some reducibility conditions to rule out products of automorphic functions on rank one groups.}  The Miatello-Wallach conjecture is a generalization of the classical G\"otzky-K\"ocher principle \cites{Gotzky,Kocher}, which shows the moderate growth of {\em holomorphic} Hilbert and Siegel modular functions.  Holomorphy is used critically in those arguments, and these results can be understood in terms of Hartog's theorem on the impossibility of isolated singularities for holomorphic functions of several complex variables.

The  G\"otzky-K\"ocher principle can also be understood directly using Fourier expansions, an approach which Miatello and Wallach successfully used to prove their conjecture in the more complicated setting of nonholomorphic automorphic eigenfunctions on products of hyperbolic space.
Their argument essentially uses a factorization of the relevant automorphic coefficients, and deduces the inconsistency of two types of different behavior that must occur for eigenfunctions lacking moderate growth.  However, this factorization  cannot work in general, and certainly fails for  $SL(n,\R)$ when $n>2$.

Equally as important, all of the these  Fourier expansion arguments   heavily depend on having an {\em abelian} unipotent radical, the lack of which is a  serious obstacle already for $SL(3,\R)$ --- where tools as simple as the absolute convergence of the Piatetski-Shapiro/Shalika expansion   (\ref{introSL3Fourexp}) are unavailable.  In more detail,
automorphic forms on $SL(3,\Z)\backslash SL(3,\R)/SO(3,\R)$ have a Fourier expansion of the form
\begin{equation}\label{introSL3Fourexp}
\aligned
   F(g) \ \ &  = \ \ \sum_{k\,\in\,\Z}[P^{k,0,0}F](g) \ + \ \sum_{\ell\,=\,1}^\infty\,\sum_{\g\in\G_\infty^{(2)}\backslash \G^{(2)}}\,\sum_{k\,\in\,\Z}[P^{k,0,\ell}F]\(\ttwo{\gamma}{0}{0}{1} g\) \\
  &  = \ \  \sum_{\ell\,\in\,\Z}[P^{0,0,\ell}F](g) \ + \ \sum_{k\,=\,1}^\infty\,\sum_{\g\in\G_\infty^{(2)}\backslash \G^{(2)}}\,\sum_{\ell\,\in\,\Z}[P^{k,0,\ell}F]\(\ttwo{1}{0}{0}{\g} g\),
\endaligned
\end{equation}
where  $\G^{(2)}=SL(2,\Z)$, $\G_\infty^{(2)}$ is its subgroup of unit upper triangular matrices, and  the coefficients $P^{k,0,\ell}F$ are defined in (\ref{proj7}) (they are characterized as finite linear combinations of certain special functions in \secref{sec:Whittaker}).  These sums are only guaranteed to converge in the order stated.

%Characters of the diagonal subgroup $\left\{ \tthree{a_1}000{a_2}000{a_3}|a_1,a_2,a_3>0\right\}$ have the form $a_1^{\lambda_1+1}a_2^{\lambda_2}a_3^{\lambda_3-1}$ for some triple of complex numbers $\l=(\l_1,\l_2,\l_3)$ which plays a role analogous to the parameter $\nu$ from (\ref{weakmaass}).  Such a character extends trivially on unit upper triangular matrices and signed diagonal matrices, to a character of the full subgroup of upper triangular matrices  in $SL(3,\R)$.  This last character again extends (by assigning it to transform trivially on the right under $SO(3)$) to an eigenfunction $\phi_\lambda$ of the full ring of  bi-invariant differential operators $\mathcal D$ on $SL(3,\R)$.  For any eigenfunction $F$ of $\mathcal D$ on $SL(3,\R)$, there is some $\lambda\in \C^3$ so that $\phi_\lambda$ and $F$ have the same eigenvalues under any $D\in\mathcal D$.  The parameter $\lambda$ is unique up to permutation of the $\lambda_i$, and is the Satake parameter of $F$.  Throughout this paper we   make the standing assumption that

 When $F$ is assumed to have {\em moderate growth}, meaning that   \begin{equation}\label{thmpunch}
    \left|   F\(\tthree{1}{x}{z}{0}{1}{y}{0}{0}{1}\tthree{a_1}000{a_2}000{a_3}  \)\right| \ \  \le \ \ C\,(\smallf{a_1}{a_3})^{N}  \, , \  \ \ a_1\, \ge\, \smallf{\sqrt{3}}{2}a_2 \,\ge \,\smallf34 a_3\,>\,0\,,
\end{equation}
for some positive constants $C$ and $N$ depending only on $F$,\footnote{The particular choice of the   constants $\f{\sqrt{3}}{2}$ and $\f34=(\f{\sqrt{3}}{2})^2$ in (\ref{thmpunch}) comes from the fact that region described by the inequalities contains a fundamental domain for $SL(3,\Z)$, but is  not essential to the statement.
} the coefficients in (\ref{introSL3Fourexp}) have a particularly special form, with $P^{1,0,1}F$ equal to a scalar multiple of the {\em decaying} Whittaker function $W_\l(g)$ defined in (\ref{Wdef}).  This is precisely analogous to the condition that $b_n$ vanish in (\ref{weakmaass}).  However, in the absence of such an assumption $P^{k,0,\ell}F$ instead merely belongs to a 6-dimensional subspace of Whittaker functions, of which only the scalar multiplies of a translate of $W_\l(g)$ decay -- again, analogously to (\ref{weakmaass}).

By comparison with the $SL(2)$ situation, one might expect that the Miatello-Wallach conjecture is equivalent to the absence of non-decaying Whittaker functions in the Fourier expansion  (\ref{thmpunch}). Indeed one direction is clear -- this absence is necessary for moderately growing forms -- but  sufficiency is difficult to prove
 when the maximal unipotent subgroup is nonabelian.  In particular, we cannot rule out the possibility that there is a  counterexample to the Miatello-Wallach conjecture having only decaying Whittaker functions in its Fourier expansion.

  Thus our results mainly address the absence of non-decaying Whittaker functions.  Our first result shows that the presence of even a single non-decaying Whittaker function implies that the terms in (\ref{introSL3Fourexp}) are not bounded:

\begin{thm}\label{thm:convergence1}  Assume (\ref{generalposition}).
 Let $F\in C^{\infty}(SL(3,\Z)\backslash SL(3,\R)/SO(3,\R))$ be an eigenfunction of the full ring of bi-invariant differential operators on $SL(3,\R)$.
Suppose that some $[P^{k,0,\ell}F](g)$ in   (\ref{introSL3Fourexp}) does not have moderate growth.  Then for some $g\in SL(3,\R)$ one of the two Fourier expansions in (\ref{introSL3Fourexp})  must   contain unboundedly large terms, and in particular is not absolutely convergent.
\end{thm}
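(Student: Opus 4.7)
The plan is to work by contrapositive: suppose that for every $g \in SL(3,\R)$, the individual terms of both Fourier expansions in (\ref{introSL3Fourexp}) have uniformly bounded absolute values at $g$ (over the index set $(k, \ell, \gamma)$), and deduce that every coefficient $[P^{k, 0, \ell}F]$ has moderate growth, contradicting the hypothesis.

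By the Whittaker-function classification from \secref{sec:Whittaker}, each $[P^{k, 0, \ell}F]$ with $k\ell \neq 0$ is a linear combination
\[
[P^{k, 0, \ell}F](g) \ = \ \sum_{w} c_w(k, \ell)\, W^w_\l(g)\,\psi^{k, \ell}(n(g))
\]
of the six Jacquet-type Whittaker functions indexed by the Weyl group of $SL(3)$, with only the $W_\l$-direction being decaying along the Siegel set. Non-moderate growth of some $[P^{k_0, 0, \ell_0}F]$ then forces $c_{w_*}(k_0, \ell_0) \neq 0$ for at least one $w_* \neq e$, and the corresponding $W^{w_*}_\l$ grows exponentially along a cone $\mathcal C_{w_*} \subset A^+$ while the other five basis elements remain polynomially controlled. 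The cone $\mathcal C_{w_*}$ is characterized by which of the ratios $a_1/a_2$, $a_2/a_3$ tends to infinity.

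The crux is to use the assumed uniform boundedness of Fourier terms at every point to rule out each non-trivial $c_{w_*}$. For a well-chosen test point $g_*$, I would examine the countable family $\bigl\{[P^{k_0, 0, \ell_0}F]\bigl(\ttwo{\gamma}{0}{0}{1}g_*\bigr)\bigr\}_{\gamma}$ for $\gamma \in \Gamma_\infty^{(2)}\backslash \Gamma^{(2)}$. If $g_*$ can be chosen so that this family visits $\mathcal C_{w_*}$ along an unbounded sub-ray, then the exponential growth of the $W^{w_*}_\l$-component dominates the sampled values; since by standing assumption the family is bounded, we must have $c_{w_*}(k_0, \ell_0) = 0$. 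Cones of the form $a_1/a_2 \to \infty$ are accessible via the first expansion; those with $a_2/a_3 \to \infty$ are handled by the analogous argument using the second expansion with $\ttwo{1}{0}{0}{\gamma}$. Running through all five non-trivial Weyl elements via the two expansions forces every $c_w$ with $w \neq e$ to vanish, so each $[P^{k, 0, \ell}F]$ is a scalar multiple of the decaying Jacquet function and thus has moderate growth.

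The main obstacle is verifying that a suitable test point $g_*$ exists whose $SL(2,\Z)$-orbit under $\ttwo{\gamma}{0}{0}{1}$ enters the relevant growth cone $\mathcal C_{w_*}$. Since translations $\ttwo{\gamma}{0}{0}{1}$ preserve $a_3$, and the $y$-coordinate of the top-left $SL(2,\R)$-orbit is bounded by the starting $y_*$ (non-translation $\gamma$ drive $y \to 0$), direct access to a cone of the form $a_1/a_2 \to \infty$ is obstructed. Overcoming this will likely require applying the Weyl-group functional equations among the $W^w_\l$ to trade blow-up at $a_1/a_2 \to \infty$ for blow-up at $a_1/a_2 \to 0$ (for a reflected Weyl element), which \emph{is} accessible within the $SL(2, \Z)$-orbit. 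A case analysis matching each non-trivial $w_*$ to an appropriate cone—accessed through either the first or the second expansion—will then complete the argument.
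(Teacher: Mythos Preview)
Your contrapositive strategy and the idea of sampling $[P^{k_0,0,\ell_0}F]$ along the $\G^{(2)}$-orbit are exactly what the paper does.  However, the step where you resolve the ``obstacle'' is where the argument currently breaks down.

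First, the picture of the asymptotics is inaccurate: it is not true that a single $\mathcal{M}_{w_*\l}$ grows exponentially along some cone while the other five remain polynomially controlled.  All six $\mathcal{M}_{w\l}$ grow exponentially in the Siegel set, and their separation is a matter of comparing rates (cf.~\thmref{thm:To&Nicolas} and (\ref{Mdefttt2})).  Second, and more seriously, the $\ttwo{\g}{0}{0}{1}$-orbit of the identity does not wander over a cone at all: formula (\ref{remainingellsum1}) shows it lies precisely on the one-parameter ray $a_{t^{-2},t}$ with $t=\d(\g,i)\to\infty$.  So you only ever sample along this single ray (and, via the contragredient/second expansion, along $a_{t,t^{-2}}$).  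Your proposed fix via ``Weyl-group functional equations'' does not address this, and a case analysis matching five non-trivial Weyl elements to five separate cones cannot be carried out from just these two rays.

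What actually closes the gap is \thmref{thm:Tien}: along $a_{kt^{-2},\ell t}$ one computes directly (see (\ref{Mdefttt1})--(\ref{Mdefttt2})) that each $\mathcal{M}_{w\l}$ has the {\em same} exponential factor $e^{2\pi|\ell|t}$ but a {\em distinct} polynomial prefactor $t^{-3(1+(w\l)_1)/2}$, distinct because of (\ref{generalposition}).  Boundedness along this ray therefore does not isolate a single $w_*$, but instead forces the antisymmetry $c(k,\ell;(23)w)=-c(k,\ell;w)$ for all $w$.  The contragredient ray $a_{t,t^{-2}}$ then forces the companion relation under $(12)$.  Since $(12)$ and $(23)$ generate $\Omega$, these two relations together pin down the coefficients as proportional to those in (\ref{Wdef}), i.e., (\ref{Pk0ellW}) holds.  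The degenerate cases $k=0$ or $\ell=0$ are handled by the same ray-asymptotics applied to (\ref{Mdegendef1})--(\ref{Mdegendef2}).  You should replace the cone/functional-equation paragraph with this mechanism.
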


To our knowledge, there are no examples in the theory of automorphic functions of Fourier expansions having unbounded terms, much less ones that do not converge absolutely.   In  Lemmas~\ref{lem:Fouriercoeffbound} (part 3)) and \ref{lem:modgrowthnoMs} we prove stronger results on sums of decaying Whittaker functions that allow us to conclude the Miatello-Wallach conjecture under the assumption that the Fourier expansion has bounded terms:

\begin{cor}\label{cor:convergence2}
 Assume (\ref{generalposition}).
The  Miatello-Wallach conjecture is true for eigenfunctions
  $F\in C^{\infty}(SL(3,\Z)\backslash SL(3,\R)/SO(3,\R))$  of the full ring of bi-invariant differential operators on $SL(3,\R)$  for which the Fourier coefficients
\begin{equation}\label{thmbddness}
    [P^{k,0,\ell}F]\(\ttwo\g{0}{0}1 g\)\, \text{and} \ \ [P^{k,0,\ell}F]\(\ttwo1{0}{0}\g g\), \ \  \text{for}
          \  (k,\ell)\,\neq\,(0,0),\,\g\,\in\,\G_\infty^{(2)}\backslash \G^{(2)},
\end{equation}
from (\ref{introSL3Fourexp})
are bounded for any fixed $g\in SL(3,\R)$.
  That is, the boundedness of (\ref{thmbddness}) implies that $F$ satisfies the moderate growth condition (\ref{thmpunch}) for some positive constants $C$ and $N$ depending only on $F$.
\end{cor}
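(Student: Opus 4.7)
The plan is to deduce the corollary from three ingredients: Theorem \ref{thm:convergence1} (used in contrapositive form) together with Lemmas \ref{lem:Fouriercoeffbound} (part 3) and \ref{lem:modgrowthnoMs} flagged in the discussion preceding the corollary.

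First, I would invoke the contrapositive of Theorem \ref{thm:convergence1}. By hypothesis the Fourier coefficients (\ref{thmbddness}) are bounded in $\gamma$ for each fixed $g$, so neither Fourier expansion in (\ref{introSL3Fourexp}) contains unboundedly large terms at any $g$. Theorem \ref{thm:convergence1} therefore forces every $[P^{k,0,\ell}F]$ to be of moderate growth. By the discussion immediately after (\ref{thmpunch}), this pins down each $[P^{k,0,\ell}F]$ with $(k,\ell)\neq(0,0)$ as a scalar multiple of the translate of the decaying Whittaker function $W_\l$, ruling out the exponentially growing basis elements of the $6$-dimensional Whittaker space.

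Second, I would apply Lemma \ref{lem:Fouriercoeffbound} (part 3) to obtain a quantitative bound on the scalar multipliers of these decaying Whittaker functions, expressed in terms of the $g$-uniform supremum bounds supplied by the hypothesis on (\ref{thmbddness}). Such a bound should be strong enough to guarantee the absolute convergence of the two Fourier expansions in (\ref{introSL3Fourexp}) once the non-decaying Whittaker contributions have been excluded. Third, I would apply Lemma \ref{lem:modgrowthnoMs}, which upgrades this coefficient-wise moderate growth (in a setting with only decaying Whittaker components and summable coefficients) into pointwise moderate growth of the sum $F$ itself, yielding the bound (\ref{thmpunch}). The degenerate-direction terms $[P^{k,0,0}F]$ and $[P^{0,0,\ell}F]$ are already of moderate growth by the first step and can be absorbed into the same summation argument.

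The main obstacle is passing from coefficient-wise moderate growth to pointwise moderate growth of $F$. Because the maximal unipotent radical of $SL(3,\R)$ is nonabelian, one cannot invoke Parseval on the unipotent Fourier transform directly, which is precisely where the Miatello-Wallach factorization approach breaks down beyond rank one. The strategy above circumvents this by first using the boundedness hypothesis to eliminate the non-decaying Whittaker basis elements via Theorem \ref{thm:convergence1}, and only then summing the surviving decaying pieces uniformly over $k$, $\ell$, and $\gamma\in\G_\infty^{(2)}\backslash\G^{(2)}$. This uniform summation is the technical heart of the argument, handled by Lemma \ref{lem:modgrowthnoMs}.
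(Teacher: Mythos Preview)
Your proposal is correct and follows essentially the same route as the paper: the contrapositive of Theorem~\ref{thm:convergence1} supplies conditions a), b), c) of Lemma~\ref{lem:modgrowthnoMs}, part~3) of Lemma~\ref{lem:Fouriercoeffbound} supplies condition~d), and Lemma~\ref{lem:modgrowthnoMs} then yields (\ref{thmpunch}). One small imprecision worth cleaning up: the sentence ``pins down each $[P^{k,0,\ell}F]$ with $(k,\ell)\neq(0,0)$ as a scalar multiple of the translate of the decaying Whittaker function $W_\l$'' applies only when both $k,\ell\neq 0$; in the degenerate cases $k=0$ or $\ell=0$ the conclusion is instead (\ref{Pk000degen}) or (\ref{Pk000degena2}), involving the degenerate Whittaker functions $W^{\a_i}_{\text{degen},\lambda}$ --- you acknowledge this at the end, but the earlier sentence should be stated accordingly.
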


Having shown the Miatello-Wallach conjecture under the assumption of bounded Fourier expansions, we now return to the situation of (\ref{weakmaass}) and
  impose an exponential bound on $F$.
  Note that the Miatello-Wallach conjecture is more general in that it allows for super-exponential growth that is excluded by the usual definition of weak modular or Maass form.  Indeed,   although all noteworthy automorphic eigenfunctions for $SL(2)$ (such as $j(z)$) are bounded by some exponential in $\Im\!\!(z)$, there do exist holomorphic modular functions (such as $e^{j(z)}$) which are not.

  The following result shows that an exponential bound is sufficient to rule out non-decaying Whittaker functions,
and hence unlike (\ref{weakmaass}) from  the classical $SL(2)$ theory, there are no  eigenfunctions on $SL(3,\Z)\backslash SL(3,\R)/SO(3,\R)$ that have both exponential growth and growing Whittaker functions.
(However, once again we cannot   rule out the possibility that an exponentially growing automorphic eigenfunction for $SL(3,\Z)\backslash SL(3,\R)/SO(3,\R)$ has only decaying Whittaker functions in its Fourier expansion.)

\begin{thm}\label{thm:main}
 Assume (\ref{generalposition}).
Let $F\in C^{\infty}(SL(3,\Z)\backslash SL(3,\R)/SO(3,\R))$ be an eigenfunction of the full ring of bi-invariant differential operators on $SL(3,\R)$, and assume that
\begin{equation}\label{thmcond}
 \left|   F\(\tthree{1}{x}{z}{0}{1}{y}{0}{0}{1}\tthree{a_1}000{a_2}000{a_3}  \)\right| \ \  \le \ \
 C\,\exp(K(\smallf{a_1}{a_2}+\smallf{a_2}{a_3})) \, , \ \ a_1\ge \smallf{\sqrt{3}}{2}a_2 \ge \smallf34 a_3>0\,,
\end{equation}
for some positive constants $C$ and $K$ depending only on $F$.  Then for all integers $k$ and $\ell$,  $[P^{k,0,\ell}F](g)$ has moderate growth in $g$; that is,  $F$'s Fourier expansion (\ref{introSL3Fourexp}) cannot contain non-decaying Whittaker functions.
\end{thm}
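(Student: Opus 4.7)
The plan is to argue by the contrapositive of Theorem~\ref{thm:convergence1}: in order to prove that every $[P^{k,0,\ell}F]$ has moderate growth, it suffices to show that for each fixed $g\in SL(3,\R)$ the terms of both Fourier expansions in (\ref{introSL3Fourexp}) are bounded in $(k,\ell,\g)$. So the task is to show that the exponential bound (\ref{thmcond}) forces
$[P^{k,0,\ell}F]\(\ttwo{\g}{0}{0}{1} g\)$ and $[P^{k,0,\ell}F]\(\ttwo{1}{0}{0}{\g} g\)$
to be bounded uniformly in $(k,\ell)\in\Z^2$ and $\g\in \G_\infty^{(2)}\backslash \G^{(2)}$.

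Focusing on terms of the first type, I express each as the unipotent integral
\[
[P^{k,0,\ell}F]\(\ttwo{\g}{0}{0}{1} g\) \;=\; \int_0^1\!\int_0^1\!\int_0^1 F\!\(n(x,y,z)\ttwo{\g}{0}{0}{1} g\)\,e^{-2\pi i(kx+\ell y)}\,dx\,dy\,dz,
\]
where $n(x,y,z)$ is the standard upper-unipotent matrix. Conjugating the unipotent past $\ttwo{\g}{0}{0}{1}\in SL(3,\Z)$ and invoking the $SL(3,\Z)$-invariance of $F$, the integrand becomes $F(n'(x,y,z;\g)\,g)$, where $n'=\ttwo{\g}{0}{0}{1}\i n(x,y,z)\ttwo{\g}{0}{0}{1}$ is block upper-triangular: its upper-left $2\times 2$ block is $I+xE_{c,d}$ (with $\g=\ttwo{a}{b}{c}{d}$ and $E_{c,d}=\ttwo{cd}{d^2}{-c^2}{-cd}$), and its off-diagonal column depends linearly on $(y,z)$. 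To bound $|F(n'g)|$ using (\ref{thmcond}), I reduce $n'g$ into a Siegel set via further $SL(3,\Z)$-automorphy: an element $\ttwo{\alpha}{0}{0}{1}$ with $\alpha\in SL(2,\Z)$ moves the upper block into an $SL(2,\Z)$-reduced region of $SL(2,\R)$, and an integer translation $\ttwo{1}{w}{0}{1}$ with $w\in\Z^2$ brings the resulting off-diagonal vector into $[0,1]^2$. The Siegel coordinates of the reduced matrix then depend only on $g$, yielding a bound $|F(n'g)|\le C_g$ uniform in $(x,y,z,\g)$.

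To get decay for large $(k,\ell)$, I integrate by parts $N$ times in each of $x$ and $y$ against the oscillatory factor. This produces the decay $(2\pi|k|)^{-N}(2\pi|\ell|)^{-N}$ while replacing $F$ in the integrand by right-invariant derivatives in the Lie-algebra directions $E_{12}$ and $E_{23}$. Since $F$ is a joint eigenfunction of the full ring of bi-invariant differential operators (in particular of the elliptic Casimir), elliptic regularity propagates the exponential bound from $F$ to all of its derivatives with constants $C_N$ independent of $(k,\ell)$; the Siegel reduction applies verbatim to these derivatives. The upshot is a $(k,\ell,\g)$-uniform bound on $|[P^{k,0,\ell}F](\ttwo{\g}{0}{0}{1} g)|$, and the same argument (with the roles of rows and columns swapped) handles the second Fourier expansion, completing the reduction to Theorem~\ref{thm:convergence1}.

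The principal technical obstacle is the uniform-in-$\g$ Siegel reduction of $n'(x,y,z;\g)g$. Unlike the $SL(2)$ setting, where the analogous conjugation leaves one in the same unipotent subgroup, the nonabelian unipotent radical here forces $n'$ out of the strictly upper-unipotent subgroup whenever $(c,d)\neq(0,\pm 1)$, and the entries of $n'$ grow quadratically in $(c,d)$. The two-step automorphy reduction is tailored to absorb this growth---the first step dealing with the $SL(2,\R)$ upper block and the second with the off-diagonal column---but verifying that the combined reduction yields Siegel coordinates truly bounded independently of $(c,d)$ is the main computational step of the argument.
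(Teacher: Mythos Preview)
Your reduction to Theorem~\ref{thm:convergence1} is a natural idea, but the argument has a genuine gap at the step ``the Siegel coordinates of the reduced matrix then depend only on $g$, yielding a bound $|F(n'g)|\le C_g$ uniform in $(x,y,z,\g)$.'' This claim is false, and in fact if it were true it would prove the Miatello--Wallach conjecture with no growth hypothesis on $F$ whatsoever, which should make you suspicious.

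Here is a concrete counterexample. Take $g=e$, $y=z=0$, so that $n'=\ttwo{M}{0}{0}{1}$ with $M=\g^{-1}\ttwo{1}{x}{0}{1}\g$. Left--multiplying by $\ttwo{\g}{0}{0}{1}\in SL(3,\Z)$ shows that $n'$ represents the same point in the locally symmetric space as $\ttwo{1}{x}{0}{1}\ttwo{\g}{0}{0}{1}$, whose $SL(2)$--block sends $i$ to $\g\cdot i + x=\theta_\g+x+\tfrac{i}{c^2+d^2}$. Now choose $x\in[0,1]$ so that $\theta_\g+x\in\Z$; after the $SL(2,\Z)$--reduction $z\mapsto -1/(z-n)$ this becomes $i(c^2+d^2)$. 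The associated $3$--lattice $\Z^2 M\times\Z$ then has successive minima of order $(c^2+d^2)^{-1/2}$, $1$, $(c^2+d^2)^{1/2}$, so its Siegel coordinates satisfy $y_1,y_2\asymp\sqrt{c^2+d^2}\to\infty$. Hence the best bound (\ref{thmcond}) gives on the integrand is $\exp\bigl(2K\sqrt{c^2+d^2}\bigr)$, blowing up with $\g$; integration by parts in $x,y$ buys only polynomial decay in $k,\ell$ and does nothing about this growth in $\g$.

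The paper's proof proceeds quite differently: it applies the bound (\ref{thmcond}) not to the full $N$--integrated coefficient but to the abelian coefficient $[P^{m,n}F]$ at the special diagonal points $\mathrm{diag}(t,t,t^{-2})$, which commute with $\ttwo{\g}{0}{0}{1}$, obtaining a uniform $O(e^{Kt^3})$ bound on a finite sum of Whittaker functions evaluated at $a_{(c^2+d^2)^{-1},\,t^3\sqrt{c^2+d^2}}$. Choosing $c^2+d^2$ large enough, the non-decaying Whittaker asymptotics from Theorem~\ref{thm:To&Nicolas} grow like $\exp(2\pi|\ell|\sqrt{c^2+d^2}\,t^3)$, which violates the $O(e^{Kt^3})$ bound unless the offending coefficients vanish. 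The point is that the exponential hypothesis is exploited only along a carefully chosen one--parameter ray, not uniformly over the whole $\g$--orbit as your argument would require.
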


In Section\sdmnote{Sections ~\ref{sec:generalfourexp} and}\ref{sec:SL3fourexp} we give some   background on Fourier expansions, culminating in the Piatetski-Shapiro/Shalika formula (\ref{introSL3Fourexp}).  Sections~\ref{sec:Whittaker} and \ref{sec:estimates} are devoted to estimates on Whittaker functions, in particular     recent results of Templier \cite{Templier} derived from Givental's integral representation of Whittaker functions   \cite{Givental}.  The proofs of Theorems~\ref{thm:convergence1} and \ref{thm:main}   are given in Sections~\ref{sec:proofofconverg} and \ref{sec:proofofbound}, respectively.  Finally,
\secref{sec:Hecke} contains some  material about Hecke actions on automorphic functions not having moderate growth; in particular, it outlines a potential reduction aimed at removing  assumption (\ref{thmcond}).

\vspace{.23cm}

  \noindent {\bf Acknowledgements:} The authors would like to thank Nolan Wallach for his generous discussions and advice, out of which key ideas emerged.  The authors would also like to thank Daniel Bump, Bill Casselman, Dorian Goldfeld, Peter Sarnak, Wilfried Schmid, Eric Stade,  Nicolas Templier, Akshay Venkatesh, and Gregg Zuckerman for their guidance on various aspects of growth estimates.

\section{Fourier expansions on $SL(3,\Z)\backslash SL(3,\R)/SO(3,\R)$}\label{sec:SL3fourexp}

In this section we derive Fourier expansions automorphic functions on $\Gamma\backslash G$, where   $G=SL(3,\R)\supset \G=SL(3,\Z)$ (see \cite{Bump} for a general reference).  We use the standard notation $N\subset G$ for the subgroup of unit upper triangular matrices, $A$ for the subgroup of positive diagonal matrices,
and $K=SO(3,\R)$.  The subgroup $A$ is parameterized as
$A=\{a_{y_1,y_2}|y_1,y_2>0\}$, where
\begin{equation}\label{ay1y2}
    a_{y_1,y_2} \ \  = \ \ \tthree{y_1^{2/3}\,y_2^{1/3}}000{y_1^{-1/3}y_2^{1/3}}000{y_1^{-1/3}y_2^{-2/3}}.
\end{equation}
The Iwasawa decomposition asserts that each element of $g$ can be uniquely decomposed as $g=na_{y_1,y_2}k$ for some $n\in N$, $y_1,y_2>0$, and $k\in K$.  Note that (\ref{ay1y2}) has a well-defined meaning as an element of $G$ for any $y_1,y_2\neq 0$, e.g., $a_{1,-1}=\tthree{-1}000{-1}0001$.

Let $F\in C^\infty(\G\backslash G)$ and define the projections
\begin{equation}\label{proj1}
    [P^{m,n}F](g) \ \ := \ \ \int_{(\Z\backslash \R)^2} F\(\tthree{1}{0}{z}{0}{1}{y}{0}{0}{1}g\)e^{-2\pi i(mz+ny)}\,dy\,dz
\end{equation}
for $m,n\in\Z$,
so that
\begin{equation}\label{proj2}
    F(g) \ \ = \ \ \sum_{m,n\,\in\,\Z}  [P^{m,n}F](g)
\end{equation}
is an absolutely convergent Fourier series.
The change of  variables $z=au+bv$, $y=cu+dv$ in (\ref{proj1}) yields the identity
\begin{equation}\label{proj3}
    [P^{m,n}F]\(\tthree ab0cd0001g\) \ \ = \ \ [P^{ma+nc,mb+nd}F](g)\,, \ \ \ttwo abcd\,\in\,SL(2,\Z)\,.
\end{equation}
In particular,
\begin{equation}\label{proj4}
    [P^{m,n}F](g) \ \ = \ \ [P^{0,\ell}F]\(\tthree ab0cd0001g\),
\end{equation}
where $\ell=\gcd(m,n)$, $c=\f{m}{\gcd(m,n)}$,  $d=\f{n}{\gcd(m,n)}$, and $a,b\in\Z$ are chosen so that $ad-bc=1$.

The identity (\ref{proj4}) has a number of significant implications. For example,
the smoothness of $F$ and the Riemann-Lebesgue Lemma imply that the Fourier coefficient
\begin{equation}\label{projdecay}
    [P^{0,\ell}F]\!\(\tthree ab0cd0001g\) \  \longrightarrow \  0 \ \ \ \text{as} \ \ c^2+d^2\rightarrow\infty\,;
\end{equation}
in fact the decay is faster than any negative power of $c^2+d^2$.
It also follows that
\begin{equation}\label{proj5}
    [P^{0,\ell}F]\!\(\tthree 110010001 g\) \ \ = \ \  [P^{0,\ell}F]( g)\,,
\end{equation}
demonstrating that  (\ref{proj4}) depends only on the left-$\Gamma_\infty^{(2)}$ coset of $\ttwo abcd$ (this also reflects the fact that (\ref{proj4}) is independent of the choice of integers $a$ and $b$ satisfying $ad-bc=1$).  This periodicity implies the absolutely convergent Fourier expansion
\begin{equation}\label{proj6}
     [P^{0,\ell}F]( g) \ \ = \ \ \sum_{k\,\in\,\Z} [P^{k,0,\ell}F](g)\,,
\end{equation}
where
\begin{equation}\label{proj7}
    [P^{k,0,\ell}F](g) \ \ := \ \ \int_{(\Z\backslash \R)^3} F\(\tthree{1}{x}{z}{0}{1}{y}{0}{0}{1}g\)e^{-2\pi i(k x+\ell y)}\,dx\,dy\,dz\,.
\end{equation}
In fact,
\begin{equation}\label{proj7b}
    \int_{(\Z\backslash \R)}F\(\tthree10z010001 g\)dz \ \ = \ \ \sum_{k,\ell\,\in\,\Z}[P^{k,0,\ell}F](g)\,,
\end{equation}
hence the righthand side represents the Fourier expansion of the smooth function on the left-hand side,  thereby demonstrating the absolute  convergence of this double sum.

Combining (\ref{proj2}), (\ref{proj4}), and (\ref{proj6}) results in the Piatetski-Shapiro/Shalika  Fourier expansion \cite{psmult,shalika}
\begin{equation}\label{SL3Fourexp}
    \aligned
    F(g) \ \ & = \ \ [P^{0,0}F](g) \ + \ \sum_{\ell\,=\,1}^\infty\,\sum_{\g\in\G_\infty^{(2)}\backslash \G^{(2)}}\,\sum_{k\,\in\,\Z}[P^{k,0,\ell}F]\(\ttwo{\gamma}{0}{0}{1} g\) \\
    & = \ \ \sum_{k\,\in\,\Z}[P^{k,0,0}F](g) \ + \ \sum_{\ell\,=\,1}^\infty\,\sum_{\g\in\G_\infty^{(2)}\backslash \G^{(2)}}\,\sum_{k\,\in\,\Z}[P^{k,0,\ell}F]\(\ttwo{\gamma}{0}{0}{1} g\),
    \endaligned
\end{equation}
i.e., the first line in (\ref{introSL3Fourexp}).  For later reference, if  $\g=\ttwo{\g_{11}}{\g_{12}}{\g_{21}}{\g_{22}}$ and $g$ is written in Iwasawa form as $g=\tthree 1xz01y001a_{y_1,y_2}k$,
with $y_1,y_2>0$, and $k\in SO(3,\R)$, a short $SL(2)$ calculation shows that
\begin{equation}\label{remainingellsum1}
    \tthree{\g_{11}}{\g_{12}}0{\g_{21}}{\g_{22}}0001 g  \ \ \in \ \ N
    a_{\f{y_1}{\d(\g,x+iy_1)^{2}},y_2 \d(\g,x+iy_1)}K\,,
\end{equation}
where $\d\(\ttwo{\g_{11}}{\g_{12}}{\g_{21}}{\g_{22}},\tau\)=|\g_{21}\tau+\g_{22}|$.

In the above derivation we chose to initially integrate the variables $y$ and $z$ in (\ref{proj1}).  Had we instead performed a Fourier expansion over the subgroup $\{\tthree 1xz010001|x,z\in \R\}$ of  $SL(3,\R)$, we would have arrived at the second line in (\ref{introSL3Fourexp}),
\begin{equation}\label{SL3Fourexpcontra}
F(g) \ \ = \ \  \sum_{\ell\,\in\,\Z}[P^{0,0,\ell}F](g) \ + \ \sum_{k\,=\,1}^\infty\,\sum_{\g\in\G_\infty^{(2)}\backslash \G^{(2)}}\,\sum_{\ell\,\in\,\Z}[P^{k,0,\ell}F]\(\ttwo{1}{0}{0}{\g} g\),
\end{equation}
which could also have been obtained from (\ref{SL3Fourexp}) via the contragredient map
\begin{equation}\label{contragredient}
   g \ \ \mapsto \ \  \tthree0010{-1}0100 (g^t)\i\tthree0010{-1}0100\i.
\end{equation}

\begin{rem}\label{convergenceremark}{\rm
It is a simple consequence of convergence of Fourier series on $\Z\backslash \R$ and $(\Z\backslash \R)^2$ that the sum in  (\ref{SL3Fourexp}) converges in the order stated.  For the same reason, (\ref{SL3Fourexp}) remains convergent if the sums over $\ell$ and $\g$ are interchanged (recall these arose from labeling the Fourier modes for $m$ and $n$ in (\ref{proj4})).  However, orthogonality of the terms in (\ref{SL3Fourexp}) was lost after the introduction of the $\g$-translate in (\ref{proj3}); put differently, the $x$-integration for the Fourier modes indexed by $k$ in (\ref{proj7})-(\ref{SL3Fourexp}) is taken over different domains for different $\gamma$.  In particular, it is not clear that the sum (\ref{SL3Fourexp}) is absolutely convergent.  This distinction is important, since   \corref{cor:convergence2} establishes the Miatello-Wallach conjecture for $SL(3,\Z)\backslash SL(3,\R)/SO(3,\R)$  under the assumption of absolute convergence.  It should be stressed that there appear to be no known examples of automorphic Fourier expansions which are not absolutely convergent.}
\end{rem}

\section{Spherical Whittaker functions for $SL(3)$}\label{sec:Whittaker}

 We shall now make the further assumption that $F$ is spherical, i.e., fixed under $SO(3,\R)$:
\begin{equation}\label{spherical}
 F(gs) \ \ =  \ \ F(g) \,,  \ \ \ \ \ s\,\in\,SO(3,\R)\,.
\end{equation}
Thus $P^{k,0,\ell}F$   obeys the  transformation law
 \begin{multline}\label{proj8}
     [P^{k,0,\ell}F]\!\(\tthree{1}{x}{z}{0}{1}{y}{0}{0}{1}gs\) \ \  = \ \
     e^{2\pi i(k x+\ell y)} \,[P^{k,0,\ell}F](g)\,, \\ \text{for~~}  x,y,z\,\in\,\R\,\text{~~and~~}s\,\in\,SO(3,\R)\,.
 \end{multline}
 By the Iwasawa decomposition, such a function is uniquely determined by its restriction to the subgroup $A=\{a_{y_1,y_2}|y_1,y_2>0\}\subset G$ of positive diagonal matrices.
We will also henceforth assume that $F$ is an eigenfunction of the full ring of  bi-invariant differential operators.  The rest of this section is devoted to describing  the eigenfunction solutions to (\ref{proj8}), along with some of their properties (see \cite{Bump,BumpHuntley,stade,ishiistade,To,VinoTakh} for more details, with \cite{Bump} again serving as a general reference).

Let $\frak a^*_\C$ denote the complex-valued linear functionals on   $A$'s Lie algebra $\frak a=\{$traceless $3\times 3$ diagonal, real matrices$\}$; under this implicit identification of $\frak a$ with a subspace of $\R^3$,
the elements  $\lambda$ of  $\frak a^*_\C$  are
 concretely   realized as  triples of complex numbers $(\l_1,\l_2,\l_3)\in \C^3$ such that $\l_1+\l_2+\l_3=0$.
Any $\lambda\in\frak a^*_\C$
  naturally lifts  to a character of $A$, written using exponential notation as
\begin{equation}\label{lambdacharonA}
   \tthree{a_1}000{a_2}000{a_3}^{\l} \ \ = \ \ a_1^{\l_1}\,a_2^{\l_2}\,a_3^{\l_3}\,.
\end{equation}
Let $a(g)$ denote the Iwasawa $A$-component of $g=nak\in SL(3,\R)$, where $n\in N$, $a=a(g)\in A$, and $k\in SO(3,\R)$.  The functions $g\mapsto a(g)^{\lambda+\rho}$, where $\rho=(1,0,-1)\in {\frak a}_\C^*$,
 are eigenfunctions of the full ring of  bi-invariant differential operators.  Let $\Omega$ denote the Weyl group of $SL(3,\R)$ with respect to $A$, which we identify with  the symmetric group $S_3$; it acts on $\lambda=(\l_1,\l_2,\l_3)$ by  permutating the indices.  The eigenvalues of $a(g)^{w\lambda+\rho}$ under any  bi-invariant differential operator are independent of $w\in \Omega$.
 Moreover, given {\em any} eigenfunction $F$ of the full ring of  bi-invariant differential operators, there exists a unique Weyl orbit $\Omega\lambda\in \frak a^*_\C$  such that $F$ and $a(g)^{\l+\rho}$ share the same eigenvalues under any   bi-invariant differential operator.  In particular, the automorphic eigenfunction $F$ uniquely determines such a coset $\Omega\l\in \Omega\backslash\frak a^*_\C$, known as its Satake parameter.

%
%
%Let $\alpha_1$, $\alpha_2$, and $\alpha_3=\alpha_1+\alpha_2$ denote the simple roots of $({\frak{sl}}_3,\frak a)$, which exponentiate to characters of $A$ by the rules
%\begin{equation}\label{rootsandchars}
%    \tthree{a_1}000{a_2}000{a_3}^{\a_1} \ \ = \ \ \f{a_1}{a_2}\,, \ \
%     \tthree{a_1}000{a_2}000{a_3}^{\a_2} \ \ = \ \ \f{a_2}{a_3}\,, \ \text{and} \
%      \tthree{a_1}000{a_2}000{a_3}^{\a_3} \ \ = \ \ \f{a_1}{a_3}\,. \ \
%\end{equation}
%Write $\langle \lambda,\alpha_1\rangle=\l_1-\l_2$, $\langle \lambda,\alpha_2\rangle=\l_2-\l_3$, and $\langle \lambda,\alpha_3\rangle=
%\langle \lambda,\alpha_1\rangle+\langle \lambda,\alpha_2\rangle=\l_1-\l_3$.
Fourier expansions for eigenfunctions on $SL(2,\Z)\backslash SL(2,\R)/SO(2,\R)$ involve the
  $I$-Bessel and $K$-Bessel functions
  \begin{equation}\label{IKdef}
  \aligned
 I_\nu(x) \ \  & = \ \ \sum_{n\,=\,0}^\infty \,\f{(x/2)^{\nu+2n}}{n!\,\G(n+\nu+1)} \\
 \text{and} \ \ \   \ \ \ \ K_\nu(x) \ \ & = \ \ \f{\pi}{2}\f{I_{-\nu}(x)-I_{\nu}(x)}{\sin(\pi \nu)}\,.
 \endaligned
\end{equation}
The $I$-Bessel function grows exponentially for large $x$, whereas the $K$-Bessel function decays exponentially:
\begin{equation}\label{IKnuasympt}
\aligned
    I_\nu(u) \ \ & = \ \ \sqrt{\f{1}{2\pi u}}\,e^{u} \ + \ O(u^{-3/2}e^{u})\,, \ \ u\,\rightarrow\,\infty\\
    K_\nu(u) \ \ & = \ \ \sqrt{\f{\pi}{2u}}\,e^{-u} \ + \ O(u^{-3/2}e^{-u})\,, \ \ u\,\rightarrow\,\infty\,.
\endaligned
\end{equation}
In particular, $I$-Bessel
functions appear precisely in automorphic eigenfunctions which disobey the   moderate growth condition.
We now present definitions of some analogs for $SL(3,\Z)\backslash SL(3,\R)/SO(3,\R)$.
 Following \cite[Prop.~6]{ishiistade},  consider the non-decaying Whittaker function
\begin{multline}\label{Mdef}
    {\cal M}_\lambda(a_{y_1,y_2}) \ \ = \ \
    \f{\pi^3}{\sin(\smallf{\pi}{2}(\l_1-\l_2))\,\sin(\smallf{\pi}{2}(\l_2-\l_3))\,
    \sin(\smallf{\pi}{2}(\l_3-\l_1))}
    \\ \times \, |y_1 y_2|\,
    \sum_{m\,=\,0}^\infty \f{(\pi |y_1|)^{m-\l_3/2}(\pi |y_2|)^{m+\l_1/2}}{m!\,\G(m+\f{\l_1-\l_3}{2}+1)}\,
    I_{m+(\l_1-\l_2)/2}(2\pi |y_1|)I_{m+(\l_2-\l_3)/2}(2\pi |y_2|)\,.
\end{multline}
The sum over $m$     converges absolutely  to an entire function of $\lambda$, and
 plays a role for $SL(3,\R)$ directly  analogous to that of $I_\nu$ for $SL(2,\R)$.
We extend ${\cal M}_\l$ to a function of $G$ via the transformation law (\ref{proj8}) with $(k,\ell)=(1,1)$. 

 Let ${\mathcal M}_\lambda^\circ$ denote the second line in (\ref{Mdef}).
The functions ${\mathcal M}_{w\lambda}^\circ$, $w\in \Omega$, are linearly independent
when (\ref{generalposition}) holds; this can be seen from their leading small-$y_i$ asymptotics.  However, when (\ref{generalposition}) fails (such as when both $\lambda_1-\lambda_2$ and $\lambda_2-\lambda_3$ are even integers), the dimension of the span of these functions can drop to as low as 1, as can be directly verified directly from the definition and the fact that $I_\nu=I_{-\nu}$ for integral $\nu$.  (Note that there is a slight mistake in the standard references   for $GL(3)$ Whittaker functions \cite[p.24]{Bump} and \cite[p.27]{BumpHuntley}, which assert the span is 6-dimensional whenever the $\lambda_i$ are merely distinct.)  The literature also currently lacks a description of  the other    eigenfunction solutions to (\ref{proj8}) we are about to describe -- not just Whittaker functions -- in this degenerate case.  We have elected to make the (slightly) restrictive assumption  (\ref{generalposition}) as a result of the impracticality of developing  such a theory here, which would significantly lengthen this paper.

Similarly, we define degenerate Whittaker functions
\begin{equation}\label{Mdegendef1}
    {\cal M}^{\a_1}_{\text{degen},\l}(a_{y_1,y_2}) \ \ = \ \  |y_1|^{1-\l_3/2}|y_2|^{1-\l_3}I_{ (\l_1-\l_2)/2}(2\pi  |y_1|)\,,
\end{equation}
which extend to  functions on $G$ via the transformation law (\ref{proj8}) with $(k,\ell)=(1,0)$, and
\begin{equation}\label{Mdegendef2}
    {\cal M}^{\a_2}_{\text{degen},\l}(a_{y_1,y_2}) \ \ = \ \ |y_1|^{ 1+\l_1}|y_2|^{1+\l_1/2}I_{(\l_2-\l_3)/2}(2\pi |y_2|)\,,
\end{equation}
which extend to  functions on $G$ via the transformation law (\ref{proj8}) with $(k,\ell)=(0,1)$. (The superscripts   refer to nondegenerate roots for the character in (\ref{proj8}).)  The two functions (\ref{Mdegendef1}) and (\ref{Mdegendef2}) are related by the contragredient map (\ref{contragredient}).
The linear combination
\begin{equation}\label{Wdegendef}
      W_{\text{degen},\lambda}^{\a_1}(g) \ \ = \ \ \f{\pi}{2} \,\f{ {\cal M}^{\a_1}_{\text{degen},\l}(g) \ - \  {\cal M}^{\a_1}_{\text{degen},(12)\l}(g)}{\sin(\smallf{\pi}{2}(\l_2-\l_1))}\,,
\end{equation}
where $(12)$ denotes the transposition permutation in $\Omega\cong S_3$,
has moderate growth; in fact, it decays rapidly in the $y_1\rightarrow\infty$ limit, as can been seen from the exact formula
\begin{equation}\label{Mdegen1d}
   W_{\text{degen},\lambda}^{\a_1}(a_{y_1,y_2})
    \ \ = \ \
    |y_1|^{1-\l_3/2}\,|y_2|^{1-\l_3}K_{ (\l_1-\l_2)/2}(2\pi  |y_1|)
\end{equation}
(a consequence of the
 second formula in (\ref{IKdef})). Likewise, we have
 \begin{equation}\label{Wdegendefa2}
 \aligned
      W_{\text{degen},\lambda}^{\a_2}(g) \ \ & = \ \ \f{\pi}{2} \,\f{ {\cal M}^{\a_2}_{\text{degen},\l}(g) \ - \  {\cal M}^{\a_2}_{\text{degen},(23)\l}(g)}{\sin(\smallf{\pi}{2}(\l_3-\l_2))} \\
      \text{and} \ \ \ \ \ \ \ \
      W_{\text{degen},\lambda}^{\a_2}(a_{y_1,y_2}) \ \  & = \ \ |y_1|^{ 1+\l_1}|y_2|^{1+\l_1/2}\,K_{(\l_2-\l_3)/2}(2\pi |y_2|)\,,
 \endaligned
\end{equation}
consistently with (\ref{contragredient}).

Before listing the  exact form of the eigenfunction solutions to (\ref{proj8}), it is important to recall that there are more solutions listed here than appear in the  classical $L^2$ setting (where one {\em assumes} polynomial growth rather than attempting to deduce it as we are here).  It will be useful to note that
\begin{equation}\label{Pklsigns}
    [P^{k,0,\ell}]F(a_{y_1,y_2}) \ \ = \ \   [P^{k,0,-\ell}F](a_{y_1,y_2}) \ \ = \ \   [P^{-k,0,\ell}]F(a_{y_1,y_2})\,,
\end{equation}
as can be seen from (\ref{proj7}) using  the invariance of $F$ under the elements $a_{1,-1} =\tthree{-1}000{-1}0001$ and $a_{-1,1}=\tthree 1000{-1}000{-1}$.  In particular, it suffices to specify $P^{k,0,\ell}F$ for $k,\ell\ge 0$.    In the following $\lambda$ remains a Satake parameter for $F$.

\subsection*{Both $k=\ell=0$}

 $[P^{0,0,0}F](g)$ is a linear combination $\sum_{w\in\Omega}c(0,0,w)a(g)^{w\l+\rho}$ for some coefficients $c(0,0,w)\in \C$ (the powers $a(g)^{w\l+\rho}$ are linearly independent by assumption (\ref{generalposition})).
 \sdmnote{Comments about interpretation when $\lambda$ is fixed by some $w\in \Omega$.]]\footnote{This is analogous to the appearance of the function $y^{1/2}\log(y)$ out of the pair of eigenfunctions $y^{s},y^{1-s}$ at $s=1/2$ for $SL(2,\R)$.}}

\subsection*{Precisely one of $k$ or $\ell$ vanishes}

If $\ell=0$ but $k\neq 0$, the solutions are linear combinations
\begin{equation}\label{Pk000}
    [P^{k,0,0}F](g) \ \ = \ \ \sum_{w\,\in\,\Omega} c(k,0;w)\,{\cal M}^{\a_1}_{\text{degen},w\l}(a_{k,1}\,g)
\end{equation}
 for some coefficients $c(k,0;w)\in \C$.  Since we have assumed (\ref{generalposition}), the functions ${\cal M}^{\a_1}_{\text{degen},w\l}(a_{k,1}\,g)$ are linearly independent as $w$ varies over $\Omega$.
When $P^{k,0,0}F$ has moderate growth (i.e., satisfies the upper bound in  (\ref{thmpunch})) one has that $c(k,0;(12)w)=-c(k,0;w)$ for all $w\in \Omega$, and vice-versa (cf.~(\ref{Wdegendef})-(\ref{Mdegen1d})).  In particular, $[P^{k,0,0}F](g)$ has moderate growth   if and only if
\begin{equation}\label{Pk000degen}
\gathered
\, \!
[P^{k,0,0}F](g)  \ \ = \ \ d(k,0;1)\, W_{\text{degen},\lambda}^{\a_1}(a_{k,1}\,g) \ +  \ d(k,0;2)\, W_{\text{degen},(123)\lambda}^{\a_1}(a_{k,1}\,g) \\  \qquad\qquad\qquad\qquad \ \ \ + \ d(k,0;3)\, W_{\text{degen},(321)\lambda}^{\a_1}(a_{k,1}\,g)
\endgathered
\end{equation}
 for some coefficients $d(k,0;1)$, $c(k,0;2)$, $d(k,0;3)\in \C$.

 Likewise,
 if $k=0$ but $\ell\neq 0$, the solutions are linear combinations
\begin{equation}\label{Pl000}
    [P^{0,0,\ell}F](g) \ \ = \ \ \sum_{w\,\in\,\Omega} c(0,\ell;w)\,{\cal M}^{\a_2}_{\text{degen},w\l}(a_{1,\ell}\,g)
\end{equation}
 for some coefficients $c(0,\ell;w)\in \C$, and the moderate growth of $P^{0,\ell,0}F$ is equivalent to  $c(0,\ell;(23)w)=-c(0,\ell;w)$ for all $w\in \Omega$. Thus
  $[P^{0,0,\ell}F](g)$ has moderate growth if and only if
\begin{equation}\label{Pk000degena2}
 \gathered
   \!\, [P^{0,0,\ell}F](g) \ \ = \ \ d(0,\ell;1)\, W_{\text{degen},\lambda}^{\a_2}(a_{1,\ell}\,g) \ +  \ d(0,\ell;2)\, W_{\text{degen},(123)\lambda}^{\a_2}(a_{1,\ell}\,g) \\
     \qquad\qquad\qquad\qquad \ \ \  + \ d(0,\ell;3)\, W_{\text{degen},(321)\lambda}^{\a_2}(a_{1,\ell}\,g)
 \endgathered
\end{equation}
 for some coefficients $d(0,\ell;1)$, $d(0,\ell;2)$, $d(0,\ell;3)\in \C$.

Actually, (\ref{Pk000degen}) and (\ref{Pk000degena2}) are implied not just by moderate growth of the Fourier coefficients, but also by anything slower than the exponential growth of (\ref{Mdegendef1})-(\ref{Mdegendef2}).

\subsection*{Both $k,\ell\neq 0$}

 If both $k,\ell\neq 0$, then the space of eigenfunctions satisfying the transformation law (\ref{proj8}) is 6-dimensional.  We first describe this space in the special case of $k=\ell=1$.
For any $w\in \Omega$, ${\cal M}_{w\lambda}(g)$ is also an eigenfunction solution to (\ref{proj8}).  Since we have assumed (\ref{generalposition}),   the 6 functions $\{{\cal M}_{w\l}(g)|w\in\Omega\}$ span the 6-dimensional space of Whittaker functions for $SL(3,\R)/SO(3,\R)$.
% At singular $\lambda$ where two ${\cal M}_{w\l}(g)$ coincide, appropriate limits  of linear combinations are defined by analytic continuation and these limits span the full 6-dimensional space.\footnote{This is analogous to the construction of the $K_\nu$-Bessel function through the formula $\f{\pi}{2}\f{I_{-\nu}(x)-I_\nu(x)}{\sin(\pi \nu)}$:~$I_\nu$ is always linearly independent of $K_\nu$ even at points where $I_\nu$ and $I_{-\nu}$ coincide.}

The asymptotics for $y_1,y_2\ge 1$ of linear combinations of $\{{\cal M}_{w\l}(a_{y_1,y_2})|w\in\Omega\}$ were conjectured by Gregg Zuckerman using an insightful connection to the WKB approximation of mathematical physics.  Zuckerman's conjectures were formulated more generally for $SL(n,\R)$; in our case of $SL(3,\R)$ they involve the six triples of algebraic functions in two variables $(p_1^{(m)}(y_1,y_2)$, $p_2^{(m)}(y_1,y_2)$, $p_3^{(m)}(y_1,y_2))$,   $1\le m \le 6$, for which \begin{equation}\label{nilpotency}
    \tthree{p_1^{(m)}(y_1,y_2)}{y_1^2}{0}{-1}{p_2^{(m)}(y_1,y_2)}{y_2^2}{0}{-1}{p_3^{(m)}(y_1,y_2)} \ \  \text{is a nilpotent matrix}\,.
\end{equation}
%equivalently,
%\begin{equation}\label{3equations}
%\aligned
%    p_1^{(m)}(y_1,y_2) \ + \  p_2^{(m)}(y_1,y_2)\ + \  p_3^{(m)}(y_1,y_2) \ \ & = \ \ 0 \\
%    p_1^{(m)}(y_1,y_2)^2 \ + \  p_1^{(m)}(y_1,y_2)p_3^{(m)}(y_1,y_2)\ + \  p_3^{(m)}(y_1,y_2)^2 \ \ & = \ \ y_1^2+y_2^2  \  \text{and}\\
%     p_1^{(m)}(y_1,y_2) \, p_2^{(m)}(y_1,y_2) \, p_3^{(m)}(y_1,y_2) \ \ & = \ \
%    p_1^{(m)}(y_1,y_2)\,y_2^2 \ + \  p_3^{(m)}\,(y_1,y_2)y_1^2\,.
%\endaligned
%\end{equation}
All six triples can be written explicitly; we shall number them   so that
\begin{equation}\label{pmlabels}
\aligned
    p_3^{(1)}(y_1,y_2)\,-\,p_1^{(1)}(y_1,y_2)  \ \ & = \ \  \ \ \, (y_1^{2/3}+y_2^{2/3})^{3/2} \,, \\
    p_3^{(2)}(y_1,y_2)\,-\,p_1^{(2)}(y_1,y_2)  \ \ & = \ \  -\, (y_1^{2/3}+e^{-2\pi i/3}y_2^{2/3})^{3/2} \,, \\
    p_3^{(3)}(y_1,y_2)\,-\,p_1^{(3)}(y_1,y_2)  \ \ & = \ \  -\, (y_1^{2/3}+e^{2\pi i/3}y_2^{2/3})^{3/2} \,, \\
    p_3^{(4)}(y_1,y_2)\,-\,p_1^{(4)}(y_1,y_2)  \ \ & = \ \  \ \ \, (y_1^{2/3}+e^{-2\pi i/3}y_2^{2/3})^{3/2} \,,\\
    p_3^{(5)}(y_1,y_2)\,-\,p_1^{(5)}(y_1,y_2)  \ \ & = \ \  \ \ \, (y_1^{2/3}+e^{2\pi i/3}y_2^{2/3})^{3/2} \,, \\ \text{and} \ \ \
    p_3^{(6)}(y_1,y_2)\,-\,p_1^{(6)}(y_1,y_2)  \ \ & = \ \  -\,(y_1^{2/3}+y_2^{2/3})^{3/2}\,.\\
\endaligned
\end{equation}
\sdmnote{Give statement of Zuckerman's conjecture.  Tien typed something about it.}
Parts of Zuckerman's conjecture were proven by To \cite{To} in his Ph.D.~Thesis.  Using  Givental's integral representation \cite{Givental}, Templier   \cite{Templier} has recently improved To's result:

\begin{thm}\label{thm:To&Nicolas}
\cite{To,Templier}
%Let  $(p_1^{(m)}(y_1,y_2),p_2^{(m)}(y_1,y_2),p_3^{(m)}(y_1,y_2))$, $1\le m \le 6$,
%denote the six triples of algebraic functions for which
For any $\lambda\in {\frak a}_\C^*$, there is a basis $\{\phi^{(m)}_\lambda(g)|1\le m \le 6\}$ of eigenfunction solutions to (\ref{proj8}) with $k=\ell=1$ such that
\begin{equation}\label{ToandNicolasgrowth}
\aligned
   \log(\phi^{(m)}_\lambda(a_{y_1,y_2})) \ \ & \sim \ \ 2\,\pi  \(p_3^{(m)}(y_1,y_2)-p_1^{(m)}(y_1,y_2)\)
\endaligned
\end{equation}
for $y_1,y_2\ge1 $ (as either or both of $y_1,y_2\rightarrow\infty$).
\end{thm}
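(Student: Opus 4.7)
The plan is to follow the approach of Templier \cite{Templier}, building on To \cite{To}, which combines Givental's integral representation \cite{Givental} for $SL(3)$ Whittaker functions with the saddle-point (steepest descent) method.

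First, I would use Givental's integral representation of the six fundamental Whittaker functions $\mathcal{M}_{w\l}$, $w\in\Omega$, schematically
\begin{equation*}
\mathcal{M}_{w\l}(a_{y_1,y_2}) \ \ = \ \ \int_{\Gamma_w}\exp(F_\l(T;y_1,y_2))\,dT\,,
\end{equation*}
where $T$ ranges over a multi-dimensional cycle $\Gamma_w$ and $F_\l$ is an explicit exponent whose $(y_1,y_2)$-dependence enters through the $GL(3)$ Toda potential.

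Second, I would locate the critical points of $F_\l$ in $T$ for fixed $(y_1,y_2)$. The critical-point equations are the algebraic Toda equations, and after a change of variables they are equivalent to requiring the tridiagonal matrix in (\ref{nilpotency}) to be nilpotent. For generic $(y_1,y_2)$ this yields exactly the six solutions $(p_1^{(m)},p_2^{(m)},p_3^{(m)})$, $1\le m\le 6$. A direct computation using the explicit form of Givental's integrand then shows that $F_\l$ evaluated at the $m$-th critical point equals $2\pi(p_3^{(m)}(y_1,y_2)-p_1^{(m)}(y_1,y_2))$ plus $\l$-dependent subleading corrections that get absorbed into the subexponential prefactor of the saddle-point expansion (this is where the six values in (\ref{pmlabels}) emerge).

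Third, I would apply the steepest descent method by deforming each $\Gamma_w$ into a sum of steepest-descent contours through the saddles it crosses. Each saddle contributes a smooth eigenfunction $\phi^{(m)}_\l$ solving (\ref{proj8}) with $k=\ell=1$, since the contour-deformed integrand still satisfies the same covariance, and the leading asymptotic $\log\phi^{(m)}_\l(a_{y_1,y_2})\sim 2\pi(p_3^{(m)}-p_1^{(m)})$ follows directly from the saddle-point value. Varying $w\in\Omega$ and tracking which saddles are crossed expresses each $\mathcal{M}_{w\l}$ as a specific linear combination of the $\phi^{(m)}_\l$. To confirm that $\{\phi^{(m)}_\l\}_{m=1}^6$ is a basis of the $6$-dimensional Whittaker space, I would observe that the six exponents $p_3^{(m)}-p_1^{(m)}$ are pairwise distinct---visibly so along the ray $y_1=y_2\to\infty$, where they differ only by overall sign and factors of $e^{\pm 2\pi i/3}$---so that any nontrivial linear relation among the $\phi^{(m)}_\l$ would contradict (\ref{ToandNicolasgrowth}).

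The main obstacle is the rigorous saddle-point analysis: uniformly controlling tails of the integral as $(y_1,y_2)\to\infty$ in the full joint range $y_1,y_2\ge 1$ (including the unbalanced regimes $y_1\gg y_2$ and $y_2\gg y_1$), threading the branch cuts inherent to the $3/2$-power exponents appearing in (\ref{pmlabels}), and avoiding Stokes phenomena near loci where two saddles nearly coalesce. This is precisely what Templier \cite{Templier} carries out by exploiting the positivity structure of Givental's integrand to justify the requisite contour deformations, sharpening the partial result of To \cite{To}.
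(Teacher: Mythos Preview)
The paper does not give its own proof of this theorem: it is quoted as a result of To \cite{To} and Templier \cite{Templier}, with the surrounding text indicating only that Templier's argument uses Givental's integral representation \cite{Givental}. There is therefore no in-paper proof to compare your proposal against; your sketch is consistent with the brief description the paper provides of the cited approach.
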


Each $\phi^{(m)}_\lambda$ can be written as a linear combination of $\{{\cal M}_{w\l}(g)|w\in \Omega\}$ with coefficients that are meromorphic in $\lambda$.
We write
 \begin{equation}\label{Wdef}
  W_\l(g) \ \ = \ \ \sum_{w\,\in\,\Omega}{\cal M}_{w\lambda}(g) \ \ = \ \ \phi^{(6)}(g)
 \end{equation}
 for the unique decaying solution, which appears prominently in the classical setting of moderate growth.  Vinogradov-Takhtajan \cite{VinoTakh} proved the integral formula
\begin{multline}\label{VTformula}
    W_\l(a_{y_1,y_2}) \ \ = \ \  4 \,|y_1|^{1-\l_2/2}|y_2|^{1+\l_2/2}\ \times \\
  \int_0^\infty K_{(\l_1-\l_3)/2}(2\pi |y_1| \sqrt{1+x})K_{(\l_1-\l_3)/2}(2\pi |y_2| \sqrt{1+x^{-1}})\,x^{-3\l_2/4}\,\f{dx}{x}\,.
\end{multline}
  It follows from this integral and the inequalities
$|K_{\nu}(x)|\le K_{\Re\!\!(\nu)}(x)>0$   that
\begin{equation}\label{Wrealbd}
    |W_\l(a_{y_1,y_2})| \ \ \le \ \ W_{\Re\!\!(\l)}(a_{y_1,y_2})\,,
\end{equation}
where the right-hand side is in fact positive.

\thmref{thm:To&Nicolas} describes the eigenfunction solutions to (\ref{proj8}) with $k=\ell=1$.  For $k,\ell\neq 0$, each function   $\phi^{(m)}_\lambda(a_{k,\ell}g)$ satisfies the transformation law (\ref{proj8}).  In light of this observation and (\ref{Pklsigns}),
we may thus write  $P^{k,0,\ell}F$ for $k,\ell\neq 0$ as
\begin{equation}\label{PlokandMwl}
    [P^{k,0,\ell}F](g) \ \ = \ \ \sum_{m\,=\,1}^6 c(k,\ell,m)\,\phi_\lambda^{(m)}(a_{k,\ell}\,g)\,.
\end{equation}
Each of the first five lines in (\ref{pmlabels}) has unbounded real part on the domain $\{y_1,y_2\ge 1\}$. Thus if $P^{k,0,\ell}(a_{y_1,y_2})$, $k,\ell\neq 0$, is bounded for $y_1,y_2 \ge 1$ (or in fact even if it merely has moderate growth -- or even any growth slower than that of the growing Whittaker functions), it must be a scalar multiple of the decaying Whittaker function (\ref{Wdef}),
\begin{equation}\label{Pk0ellW}
    [P^{k,0,\ell}F](g) \ \ = \ \ c(k,\ell)\,W_\l(a_{k,\ell}\,g)\,,
\end{equation}
 where $c(k,\ell)=c(k,\ell,6)=c(\pm k,\pm \ell)$ (see (\ref{Pklsigns})).

We close this section with a result from the second named author's Ph.D. thesis \cite{Trinh} on the asymptotics of $[P^{k,0,\ell}F](g)$ as $g$ varies along the image of a simple coroot:

\begin{thm}[Trinh \cite{Trinh}]\label{thm:Tien}
Let $k,\ell\neq 0$ and
suppose that $[P^{k,0,\ell}F](g)$ in (\ref{PlokandMwl}) satisfies the estimates
\begin{equation}\label{Tienassumptions}
    [P^{k,0,\ell}F](a_{t^{-2},t})\ ,   \ [P^{k,0,\ell}F](a_{t,t^{-2}})\ \ = \ \ O(1)
\end{equation}
for an infinite sequence of values of $t$ tending to $\infty$.
Then $[P^{k,0,\ell}F](g)$ is a multiple of the decaying Whittaker function $W_\lambda(a_{k,\ell}g)$, i.e., (\ref{Pk0ellW}) holds.
\end{thm}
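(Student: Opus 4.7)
The plan is to exploit the Whittaker expansion (\ref{PlokandMwl}) and show that the coefficients of the five non-decaying basis functions must vanish.  First I would write
\[
    [P^{k,0,\ell}F](g) \ \ = \ \ \sum_{m\,=\,1}^{6}c_m\,\phi^{(m)}_\lambda(a_{k,\ell}\,g)
\]
using (\ref{PlokandMwl}), and note that since $a_{k,\ell}\,a_{y_1,y_2}=a_{ky_1,\ell y_2}$, the hypothesis (\ref{Tienassumptions}) translates to the $O(1)$ bound of
\[
    \sum_{m\,=\,1}^{6}c_m\,\phi^{(m)}_\lambda(a_{kt^{-2},\ell t}) \qquad\text{and}\qquad \sum_{m\,=\,1}^{6}c_m\,\phi^{(m)}_\lambda(a_{kt,\ell t^{-2}})
\]
along an infinite sequence $t\to\infty$. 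The target conclusion (\ref{Pk0ellW}) is equivalent to $c_m=0$ for all $m\ne 6$, since $\phi^{(6)}_\lambda=W_\lambda$.

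Next I would extract the leading exponential rate of each $\phi^{(m)}_\lambda$ along both coroot curves using the Zuckerman exponents (\ref{pmlabels}) from Theorem \ref{thm:To&Nicolas}.  At $(y_1,y_2)=(kt^{-2},\ell t)$ the term $y_2^{2/3}$ dominates $y_1^{2/3}$, and for any twist $c\in\{1,e^{-2\pi i/3},e^{2\pi i/3}\}$ one has $(y_1^{2/3}+c\,y_2^{2/3})^{3/2}\sim c^{3/2}\ell t$, where $c^{3/2}$ equals $+1$ for $c=1$ and $-1$ for $c=e^{\pm 2\pi i/3}$.  Combined with the outer signs of (\ref{pmlabels}), this produces the leading exponent pattern $(+\ell t,+\ell t,+\ell t,-\ell t,-\ell t,-\ell t)$ across $m=1,\dots,6$; the analogous calculation at $(kt,\ell t^{-2})$, where now $y_1^{2/3}$ dominates, yields the pattern $(+kt,-kt,-kt,+kt,+kt,-kt)$.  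Hence $\phi^{(6)}_\lambda=W_\lambda$ is the only basis element decaying exponentially in both coroot directions.  The $O(1)$ bound along $(kt^{-2},\ell t)$ forces the three ``growing'' coefficients $c_1,c_2,c_3$ to vanish, since (\ref{generalposition}) ensures that the distinct polynomial prefactors multiplying $e^{+2\pi\ell t}$ (whose exponents are $\lambda$-dependent and pairwise non-equivalent) are linearly independent as functions of $t$ and cannot cancel nontrivially.  The parallel bound along $(kt,\ell t^{-2})$ forces $c_1=c_4=c_5=0$, and the two conditions together leave only $c_6$ potentially nonzero, yielding (\ref{Pk0ellW}).

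The main obstacle will be rigorously justifying Step 2 outside the region where Theorem \ref{thm:To&Nicolas} is established: the asymptotic (\ref{ToandNicolasgrowth}) is proved for $y_1,y_2\ge 1$, but the coroot curves $(kt^{-2},\ell t)$ and $(kt,\ell t^{-2})$ exit this chamber as $t\to\infty$.  To handle this I would either revisit the saddle-point analysis of Givental's integral representation underlying Templier's proof, adapting it to the degenerate regime where one of $y_1,y_2$ shrinks and verifying that the dominant saddle does not undergo a Stokes transition along the coroot curve, or else work directly with the series definition (\ref{Mdef}), expressing each $\phi^{(m)}_\lambda$ as an explicit combination of $\{{\cal M}_{w\lambda}\}_{w\in\Omega}$ and isolating the exponential-in-$y_2$ contributions together with the $I_\nu$-to-$K_\nu$ cancellations from (\ref{IKdef}).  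Verifying stability of the six asymptotic exponents under the degeneration along these one-parameter curves, which is ultimately what keeps the three growing modes linearly independent, is the technical heart of the argument.
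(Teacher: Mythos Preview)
Your diagnosis of the domain obstacle is on target, and the paper in fact sidesteps it by taking essentially your second fallback route --- though more directly than you propose. Rather than expressing each $\phi^{(m)}_\lambda$ as a combination of the ${\cal M}_{w\lambda}$'s and then analyzing, the paper abandons the $\phi^{(m)}$-basis entirely: one writes $[P^{k,0,\ell}F](g)=\sum_{w\in\Omega}b_w\,{\cal M}_{w\lambda}(a_{k,\ell}g)$ (a basis under (\ref{generalposition})) and reads off the coroot asymptotics termwise from the series (\ref{Mdef}). Along $a_{kt^{-2},\ell t}$ the $y_1$-Bessel has vanishing argument (so the power series in (\ref{IKdef}) applies) while the $y_2$-Bessel has large argument (so (\ref{IKnuasympt}) applies); the $m=0$ term dominates and yields ${\cal M}_{w\lambda}(a_{kt^{-2},\ell t})\sim d_w\,t^{-\frac32((w\lambda)_1+1)}e^{2\pi|\ell|t}$ with $d_w\neq 0$ (cf.~(\ref{Mdefttt2})). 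Since the three possible values of $(w\lambda)_1$ are distinct by (\ref{generalposition}), the $O(1)$ hypothesis forces a fixed proportionality between the two coefficients $b_w$ in each pair sharing the same power of $t$ --- three relations, tied to one simple reflection. The other coroot curve supplies three further relations tied to the other simple reflection, and since the two reflections generate $\Omega\cong S_3$ the combined constraints leave only the line through $W_\lambda=\sum_w{\cal M}_{w\lambda}$.

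There is also a second gap in your primary route that you did not flag. Even granting an extension of Theorem~\ref{thm:To&Nicolas} to the coroot regime, the asymptotic (\ref{ToandNicolasgrowth}) records only the logarithmic leading order, and by your own sign computation the three growing modes $\phi^{(1)},\phi^{(2)},\phi^{(3)}$ all share the \emph{same} exponential rate $e^{2\pi|\ell|t}$ along $a_{kt^{-2},\ell t}$. Your assertion that they carry ``distinct polynomial prefactors\ldots pairwise non-equivalent'' is exactly what would be needed to separate them, but nothing in Theorem~\ref{thm:To&Nicolas} supplies this. The ${\cal M}$-basis computation delivers the prefactors $t^{-\frac32((w\lambda)_1+1)}$ explicitly, which is the real economy of the paper's approach and why Theorem~\ref{thm:To&Nicolas} is never invoked there.
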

For completeness, we include a sketch of the proof, starting with the well-known facts that $I_\nu(x)>0$ and $\frac{\partial}{\partial \nu}I_\nu(x)<0$ for positive values of $\nu$ and $x$.
Consider
 the integral representation
\begin{equation}\label{DLMF}
    I_\mu(x)\,I_\nu(x) \ \ = \ \ \f{2}{\pi}\int_0^{\pi/2}I_{\mu+\nu}(2\,x\,\cos\theta)\,\cos((\mu-\nu)\theta)\,d\theta \ , \ \ \
    \Re{\mu+\nu}>-1
\end{equation}
(\cite[10.32.15]{DLMF}), and specialize    $\mu=\sigma+it$ and $\nu=\bar{\mu}=\sigma-it$, where $\sigma,t\ge 0$.  Then differentiation under the integral sign shows that $\f{\partial}{\partial \sigma}|I_{\sigma+it}(x)|^2<0$ and  $\f{\partial}{\partial t}|I_{\sigma+it}(x)|^2>0$ for $\sigma>0$, i.e., $|I_\nu(x)|$ decreases in $\Re\!\!(\nu)$ and increases in $\Im\!\!(\nu)$ for $\Re\!\!(\nu)>0$.  In terms of definition (\ref{Mdef}), the large $t$-asymptotics of
\begin{multline}\label{Mdefttt1}
    {\mathcal{M}}_{\l}(a_{k t^{-2},\ell t}) \ \ = \ \
    \f{\pi^3\,|k|^{m+1-\l_3/2}\,|\ell|^{m+1+\l_1/2}}{\sin(\smallf{\pi}{2}(\l_1-\l_2))\,\sin(\smallf{\pi}{2}(\l_2-\l_3))\,\sin(\smallf{\pi}{2}(\l_3-\l_1))}
    \\ \times \,
    \sum_{m\,=\,0}^\infty \f{\pi^{2m+\l_1+\l_2/2} t^{-m-1+\l_3+\l_1/2}
   }{m!\,\G(m+\f{\l_1-\l_3}{2}+1)}\,
    I_{m+(\l_1-\l_2)/2}(2\pi |k|  t^{-2})I_{m+(\l_2-\l_3)/2}(2\pi |\ell| t)
\end{multline}
are thus manifest from (\ref{IKdef})-(\ref{IKnuasympt}) as
\begin{equation}\label{Mdefttt2}
    {\mathcal{M}}_{\l}(a_{kt^{-2},\ell t}) \ \ \sim \ \ d(k,\ell,\lambda)\,
      t^{-3/2(\l_1+1)} \,  e^{2\pi|\ell| t}
    \,,
\end{equation}
for some nonzero constant $d(k,\ell,\lambda)$ expressible as powers of $\pi$, $|k|$, and $|\ell|$.
Write $[P^{k,0,\ell}F](g)$ as a linear combination of the ${\mathcal{M}_{w\l}}(a_{k,\ell}g)$, $w\in \Omega$.  Since we have assumed (\ref{generalposition}), the $\lambda_i$ are distinct and the only way to avoid exponential growth in sums of (\ref{Mdefttt2})  is if the appropriate   proportionality of the coefficients of  ${\mathcal{M}_{w\l}}$ and ${\mathcal{M}_{(12)w\l}}$ holds.  The identical analysis applied to   ${\mathcal{M}}_{\l}(a_{kt,\ell t^{-2}})$ produces a  constraints between the coefficients of ${\mathcal{M}_{w\l}}$ and  ${\mathcal{M}_{(23)w\l}}$; combined, the coefficients are proportional to those in (\ref{Wdef}), forcing (\ref{Pk0ellW}) to hold.

\section{Estimates on decaying Whittaker functions and Fourier coefficients}\label{sec:estimates}

In this section we give  a rather crude estimate on the decaying spherical Whittaker function (\ref{Wdef}), and apply it to properties of the Fourier expansion (\ref{introSL3Fourexp}).   Far finer estimates are possible (see, for example, \cite[Theorem 1]{BumpHuntley}), but \lemref{lem:estimatesonW} --- which has a relatively simple derivation that we include  for completeness --- suffices for our purposes.

\begin{lem}\label{lem:estimatesonW}
1) There exist positive constants $Y_0$, $c_0$, and  $N$ depending on $\lambda$ such that
\begin{equation}\label{lemmalowerbd}
|\,W_\lambda(a_{y_1,y_2})\,| \ \  \gg \ \  (y_1 y_2)^{-N}\,e^{-c_0(y_1+y_2)} \ \ \gg \ \ e^{-2c_0(y_1+y_2)}
\end{equation}
 whenever $y_1,y_2>Y_0$, where the implied constant depends only on $\lambda$.

2) There exists an integer $N$ (again depending   on $\lambda$) such that
\begin{equation}\label{lemmaupperbd}
   |\,W_\lambda(a_{y_1,y_2})\,| \ \  \ll  \ \ (y_1^N+y_1^{-N})(y_2^N+y_2^{-N}) \, e^{-2\pi(y_1+y_2)} \ \ \ll \ \
   (y_1y_2)^{-N}\,e^{-\pi(y_1+y_2)}
\end{equation}
for any $y_1,y_2>0$, where the implied constant depends only on  $\lambda$.
\end{lem}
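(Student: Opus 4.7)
The plan is to derive both bounds directly from the Vinogradov--Takhtajan integral representation (\ref{VTformula}) of $W_\lambda$, together with the standard asymptotics (\ref{IKnuasympt}) and uniform estimates for the $K$-Bessel function.

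For the upper bound in part 2), I first apply (\ref{Wrealbd}) to reduce to the case of real Satake parameter $\lambda$, in which case (\ref{VTformula}) has a nonnegative integrand. I then bound each Bessel factor uniformly by an estimate of the shape $|K_\nu(u)| \ll (1 + u^{-M})\,e^{-u}$ valid for $u > 0$, where $M = M(\nu)$ is chosen large enough to accommodate the behavior of $K_\nu$ near the origin. Since $\sqrt{1+x} \ge 1$ and $\sqrt{1+x^{-1}} \ge 1$, this lets me extract the factor $e^{-2\pi(y_1+y_2)}$ from the integrand. I estimate the remaining integral by splitting $(0,\infty)$ into $\{x \le 1\}$ and $\{x \ge 1\}$, controlling each piece by the polynomial behavior of $K_\nu(u)$ as $u \to 0$; combined with the outer prefactor $y_1^{1-\lambda_2/2}y_2^{1+\lambda_2/2}$, this yields a Laurent polynomial in $y_1, y_2$ of controlled degree $N$ times $e^{-2\pi(y_1+y_2)}$, which is the first inequality of (\ref{lemmaupperbd}). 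The second inequality is elementary since $(y_1^N+y_1^{-N})(y_2^N+y_2^{-N})(y_1 y_2)^N \ll e^{\pi(y_1+y_2)}$ uniformly on $y_1, y_2 > 0$.

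For the lower bound in part 1), I apply Laplace's method to (\ref{VTformula}) for $y_1, y_2$ large. The real-valued phase $\phi(x) = 2\pi\bigl(y_1\sqrt{1+x} + y_2\sqrt{1+x^{-1}}\bigr)$ is strictly convex on $x > 0$ with a unique minimum at $x_0 = (y_2/y_1)^{2/3}$, where $\phi(x_0) = 2\pi(y_1^{2/3} + y_2^{2/3})^{3/2}$. Replacing each $K_\nu(u)$ in the integrand by its leading large-$u$ asymptotic $\sqrt{\pi/(2u)}\,e^{-u}$, Laplace's method yields
\[
W_\lambda(a_{y_1,y_2}) \ = \ P_\lambda(y_1,y_2)\,e^{-2\pi(y_1^{2/3}+y_2^{2/3})^{3/2}}\bigl(1 + o(1)\bigr),
\]
where $P_\lambda$ is an explicit nonvanishing algebraic function of $y_1, y_2, \lambda$ assembled from the outer prefactor, $x_0^{-3\lambda_2/4-1}$, $\phi''(x_0)^{-1/2}$, and the leading Bessel asymptotics, with absolute value growing at most polynomially in $y_1, y_2, y_1^{-1}, y_2^{-1}$. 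Combined with the elementary inequality $(y_1^{2/3} + y_2^{2/3})^{3/2} \le 2\sqrt{2}\,(y_1+y_2)$, this gives the first inequality of (\ref{lemmalowerbd}) for any $c_0 > 4\sqrt{2}\,\pi$ once $y_1, y_2 > Y_0(\lambda)$; the second inequality is immediate from $(y_1 y_2)^{-N} \ge e^{-N(y_1+y_2)}$ for $y_1, y_2 \ge 1$.

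The hard step is justifying the Laplace expansion when $\lambda$ is complex: the Bessel factors are then oscillatory and the nonvanishing of the leading coefficient $P_\lambda(y_1, y_2)$ is not automatic. Under hypothesis (\ref{generalposition}) this reduces to a direct stationary-phase computation showing that $P_\lambda \ne 0$. An alternative that sidesteps the issue entirely is to invoke \thmref{thm:To&Nicolas} with $m = 6$, so that $\phi^{(6)} = W_\lambda$ by (\ref{Wdef}); that theorem immediately gives $\log|W_\lambda(a_{y_1,y_2})| \sim -2\pi(y_1^{2/3}+y_2^{2/3})^{3/2}$ as $y_1, y_2 \to \infty$, and hence $|W_\lambda(a_{y_1,y_2})| \ge e^{-(2\pi+\varepsilon)(y_1^{2/3}+y_2^{2/3})^{3/2}}$ for any $\varepsilon > 0$ once $y_1, y_2$ are sufficiently large, which is more than enough for (\ref{lemmalowerbd}).
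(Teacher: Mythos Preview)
Your approach is correct and closely parallels the paper's.  For part~2), both arguments bound the Bessel factors in the Vinogradov--Takhtajan integral~(\ref{VTformula}) by a polynomial times $e^{-u}$ uniformly in $u>0$, extract $e^{-2\pi(y_1+y_2)}$, and estimate the remaining integral by splitting the $x$-range; the paper uses three pieces $(0,\tfrac12)\cup[\tfrac12,2]\cup(2,\infty)$ rather than your two and does not pass through~(\ref{Wrealbd}) to reduce to real $\lambda$, but the mechanics are the same.

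For part~1) the routes diverge.  The paper avoids a full Laplace expansion: it substitutes the leading asymptotic~(\ref{IKnuasympt}) for each $K_\nu$, writes the resulting integral as a main term $I$ plus three error integrals each bounded by $\tfrac16 I$ once $y_1,y_2\ge Y_0$, and then bounds $I$ from below simply by restricting the integration to the unit-length interval $[(y_2/y_1)^{2/3},\,1+(y_2/y_1)^{2/3}]$ containing the phase minimum.  This elementary route yields the explicit (non-optimal) constant $c_0=2\pi(\sqrt{3}+\sqrt{3/2})$ and sidesteps any uniformity issues as the critical point moves with $y_2/y_1$.  Your Laplace argument is sharper --- recovering the true exponent $2\pi(y_1^{2/3}+y_2^{2/3})^{3/2}$ --- but, as you correctly note, requires care for the nonvanishing of the prefactor when $\lambda$ is complex and for uniformity in the ratio $y_2/y_1$; you dispatch both by falling back on \thmref{thm:To&Nicolas} with $m=6$, which the paper also mentions as an alternative.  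A small remark: your inequality $(y_1^{2/3}+y_2^{2/3})^{3/2}\le 2\sqrt{2}\,(y_1+y_2)$ is valid but loose by a factor of $2$ (convexity gives $\sqrt{2}\,(y_1+y_2)$); this does not affect the argument.
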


\begin{proof}

  %  In light of (\ref{Wrealbd}), it suffices to show (\ref{lemmaupperbd}) under the assumption that $\l$ is real.
To simplify the calculations in this proof, we rescale  $y_1$ and $y_2$ by $2\pi$.
Since the terms outside the integral in (\ref{VTformula}) can be absorbed into the constant and polynomial factors, it suffices to consider the integral
\begin{equation}\label{estimatepf1}
    \int_0^\infty K_{\nu}(y_1\sqrt{1+x})\, K_{\nu}(y_2\sqrt{1+x^{-1}})\,x^{-3\l_2/4-1}\,dx\,,
\end{equation}
with  $\nu=(\l_1-\l_3)/2$.  

We begin with first estimate in  statement 2); the second estimate of course follows from it.
According to (\ref{IKnuasympt}), $e^uK_\nu(u)$ is bounded for large $u$.  Formula (\ref{IKdef}) shows that $K_\nu(u)$ is bounded by $|u|^{-q}$ for some $q>0$  as $u\rightarrow0$.  Consequently, $e^{y_1\sqrt{1+x}}K_{\nu}(y_1\sqrt{1+x})  e^{y_2\sqrt{1+x\i}}K_{\nu}(y_2\sqrt{1+x\i})$ is bounded by some fixed power of $(1+y_1\i)(1+y_2\i)$ for all $x\ge 0$, and   it thus suffices to show that
\begin{equation}\label{estimatepf2}
    \int_0^\infty \exp(-y_1\sqrt{1+x}-y_2\sqrt{1+x^{-1}})\,x^{p}\,dx \ \ \ll \ \ (y_1 y_2)^N\,e^{-y_1-y_2}
\end{equation}
for some $N>0$ and $p\in\R$.

We now split the range of integration into three pieces: $0<x<1/2$, $1/2\le x \le 2$, and $2<x<\infty$.  In the middle range,  the integrand is $O(e^{-y_1-y_2})$, as is its integral over $[1/2,2]$.  In the third range, the integrand is bounded by $e^{-y_1\sqrt{1+x}-y_2}x^{p}$.    Changing variables $x=u^2+2u$, we bound its integral over $(2,\infty)$ by
\begin{multline}\label{estimatepf3}
     e^{-y_1-y_2}\int_{\sqrt{3}-1}^\infty(u(u+2))^{p}\,(2u+2)\,e^{-y_1 u}\,du \ \ \ll \\
      e^{-y_1-y_2}\int_{\sqrt{3}-1}^\infty u^{2p+1}\,e^{-y_1 u}\,du \ \ \le \ \
      e^{-y_1-y_2}\, y_1^{-2p'-2}\,\G(2p'+2)\,,
\end{multline}
where $p'=\max(0,p)$ and the implied constant that depends only on $\l$.  Finally, the integral over the first range $0<x<1/2$ has the same form as that over the third range $2<x<\infty$, though with a different value of $p$. This
establishes  2).

   Assertion 1) follows by a similar analysis (or from \thmref{thm:To&Nicolas}).  We give a proof that results in the non-optimal value of $c_0=2\pi(\sqrt{3}+\sqrt{\smallf{3}{2}})$
   using the asymptotic lower bound for the $K$-Bessel function provided by (\ref{IKnuasympt}).  Rewrite (\ref{estimatepf1}) as
   \begin{multline}\label{estimatepf5}
    \int_0^\infty K_{\nu}(y_1\sqrt{1+x})\, K_{\nu}(y_2\sqrt{1+x^{-1}})\,x^{p}\,dx \ \ = \\
     \f{\pi}{2\sqrt{y_1y_2}}
     \(I \ + \ O(I_1) \ + \ O(I_2) \ + \ O(I_3)\)\,,
   \end{multline}
   where
   \begin{equation}\label{estimatepf6}
   \aligned
  I \ \ & = \ \ \int_0^\infty  \f{ \exp(-y_1\sqrt{1+x}-y_2\sqrt{1+x^{-1}})}{(1+x)^{1/4}\,(1+x\i)^{1/4}}\,x^{p}\,dx\,, \\
  I_1 \ \ & = \ \  \f{1}{y_1}\int_0^\infty   \f{\exp(-y_1\sqrt{1+x}-y_2\sqrt{1+x^{-1}})}{(1+x)^{3/4}\,
  (1+x\i)^{1/4}}\,x^{p}\,dx\,, \\
    I_2 \ \ & = \ \  \f{1}{y_2}\int_0^\infty  \f{\exp(-y_1\sqrt{1+x}-y_2\sqrt{1+x^{-1}})}{(1+x)^{1/4}\,
    (1+x\i)^{3/4}}\,x^{p}\,dx\,,   \ \ \text{and}\\
        I_3 \ \ & = \ \  \f{1}{y_1 y_2} \int_0^\infty  \f{ \exp(-y_1\sqrt{1+x}-y_2\sqrt{1+x^{-1}})}{(1+x)^{3/4}\,
        (1+x\i)^{3/4}}\,x^{p}\,dx  \\
  \endaligned
\end{equation}
and $p=-\frac{3\l_2}{4}-1$.
Noting that  $x\ge0$, we now choose a sufficiently large value of $Y_0$ so that    each of the three integrals $I_1$, $I_2$, and $I_3$ is bounded by $\f{1}{6}I$ for $y_1,y_2\ge Y_0$.  In this situation (\ref{estimatepf1}) is hence at least $\f{\pi}{4\sqrt{y_1y_2}}I$ in absolute value.  There is no loss of generality in assuming that $y_2\ge y_1\ge Y_0$, owing to the inherent symmetry present in (\ref{lemmalowerbd}) and in (\ref{estimatepf5}). Then by restricting the range of integration of $I$ in (\ref{estimatepf6}) to an interval, we obtain the lower bound
\begin{multline}\label{estimatepf7}
     \left| \int_0^\infty K_{\nu}(y_1\sqrt{1+x})\, K_{\nu}(y_2\sqrt{1+x^{-1}})\,x^{p}\,dx \right|  \ \  \gg \\ \f{1}{\sqrt{y_1 y_2}}\, \int_{(y_2/y_1)^{2/3}}^{1+(y_2/y_1)^{2/3}} \frac{e^{-y_1\sqrt{x+1} -y_2\sqrt{1+x\i}
    }}{\sqrt{x+1}}\,x^{p+\frac{1}{4}}\,dx\,.
\end{multline}
The exponent $-y_1\sqrt{x+1}-y_2\sqrt{x\i+1}$ in this last integral has a global maximum at $x=(y_2/y_1)^{2/3}$,  so
\begin{equation}\label{estimatepf8}
(\ref{estimatepf7}) \ \ \gg  \ \
  (y_1y_2)^{-1/2}\, (\smallf{y_2}{y_1})^{(4p-1)/6} e^{-y_2\,\sigma(y_2/y_1)}
   \,,
\end{equation}
where $\sigma(r)=\frac{\sqrt{r^{2/3}+2}}{r}+\sqrt{\frac{r^{2/3}+2}{r^{2/3}+1}}$.  Since
\begin{equation}\label{sigmaprime}
   \sigma'(r) \ \ = \ \ -\,\frac{r^{5/3}\left(r^{2/3}+1\right)^{-3/2}+2\, r^{2/3}+6}{3\,r^2 \,\sqrt{r^{2/3}+2} }
\end{equation}
is negative for positive values of $r$, $\sigma(y_2/y_1)\le \sigma(1)=\f{c_0}{2\pi}=\sqrt{3}+\sqrt{\frac{3}{2}}$.  Thus we conclude from (\ref{estimatepf7})  and (\ref{estimatepf8}) that
\begin{multline}\label{estimatepf9}
    \int_0^\infty K_{\nu}(y_1\sqrt{1+x})\, K_{\nu}(y_2\sqrt{1+x^{-1}})\,x^{p}\,dx \\ \gg \ \  (y_1y_2)^{-1/2}\,(\smallf{y_2}{y_1})^{(4p-1)/6} \,e^{-\sigma(1)\,y_2} \ \ \ge \ \
     (y_1y_2)^{-1/2}\, (\smallf{y_2}{y_1})^{(4p-1)/6}\, e^{-\sigma(1)\,(y_1+y_2)}
    \,,
\end{multline}
under the assumption that $y_2\ge y_1\ge Y_0$.  Finally,  the exponent $N$ in  (\ref{lemmalowerbd}) can be adjusted to absorb the  remaining  powers of $y_1$ and $y_2$ (as it was for those remaining from (\ref{VTformula}) at the beginning of the proof).
\end{proof}

Recall the
Fourier coefficients $c(k,\ell)$  defined in (\ref{Pk0ellW}),  for those $k,\ell\neq 0$ having $c(k,\ell,1)=\cdots=c(k,\ell,5)=0$ in (\ref{PlokandMwl}).  For   $k,\ell\neq 0$ for which the respective Fourier coefficient has moderate growth, we also defined coefficients $d(k,0;j)$ and $d(0,\ell;j)$, $j=1,2,3$, in (\ref{Pk000degen}) and (\ref{Pk000degena2}).  The following result crucially uses the lower bound in  part 1) of \lemref{lem:estimatesonW} to give upper bounds on these Fourier coefficients, which in turn  will be essential for  showing the moderate growth of Fourier expansions in Lemma~\ref{lem:modgrowthnoMs}.

\begin{lem}\label{lem:Fouriercoeffbound}

Assume (\ref{generalposition}).

1) The coefficient $c(k,\ell)$ is subexponential in $k$ and $\ell$, i.e., $c(k,\ell)=O_{\e}( e^{\e |k|+\e |\ell|})$
 for any fixed $\e>0$.

 2)  The coefficients $d(k,0;j)$, $j=1,2,3$, are subexponential in $|k|$, i.e., $d(k,0;j)=O_{\e}(e^{\e |k|})$ for any fixed $\e>0$.  Likewise, the coefficients $d(0,\ell;j)$, $j=1,2,3$, are subexponential in $|\ell|$.

3) Assume the boundedness of (\ref{thmbddness}) for any fixed $g\in SL(3,\R)$.  Then for any $\e>0$, $c(k,\ell) =O_{\e}(e^{\e \max(|k|,|\ell|)^{1/3}\min(|k|,|\ell|)^{2/3}})$.

\sdmnote{
4) Assume the growth estimate (\ref{thmcond}).  Then for any $\e>0$, $c(k,\ell)=O_{\e}(e^{\e |k|^{1/2} |\ell|})$ and $c(k,\ell)=O_{\e}(e^{\e |k|  |\ell|^{1/2}})$ this is rough; we don't use it now so I'll work on it later.}

\end{lem}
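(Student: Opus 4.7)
The plan is to use the same template in all three parts: express the desired coefficient as the ratio of a Fourier projection of $F$ evaluated at a carefully chosen point $g = a_{y_1,y_2}$ to an explicit Whittaker-type factor, then bound the numerator by the supremum of $|F|$ on a compact region of $\Gamma\backslash G$ (using only that $F$ is continuous there, which holds without any growth hypothesis) and bound the denominator from below using \lemref{lem:estimatesonW}.  The parameters $(y_1,y_2)$ are chosen depending on $\e$ but \emph{not} on $(k,\ell)$, so that $k$ and $\ell$ enter only through the argument of the Whittaker function, whose exponential decay supplies the cost of the estimate.

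For Part~1, fix $\e>0$ and set $\tau = \e/(2c_0)$, where $c_0$ is the constant from \lemref{lem:estimatesonW}.  Applying (\ref{Pk0ellW}) at $g = a_{\tau,\tau}$ gives $|c(k,\ell)| = |[P^{k,0,\ell}F](a_{\tau,\tau})|/|W_\l(a_{k\tau,\ell\tau})|$.  The numerator is bounded by $\sup_{(x,y,z)\in[0,1]^3}|F(\tthree 1xz01y001 a_{\tau,\tau})|$, which is finite because that compact subset of $G$ projects to a compact subset of $\Gamma\backslash G$ where the continuous function $F$ is bounded.  For $k\tau,\ell\tau>Y_0$, part~1 of \lemref{lem:estimatesonW} bounds the denominator below by a constant times $(k\ell\tau^2)^{-N}\,e^{-c_0\tau(k+\ell)} = (k\ell\tau^2)^{-N}\,e^{-\e(k+\ell)/2}$, and the polynomial factor is absorbed into a slightly larger exponential.

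For Part~2, the three coefficients $d(k,0;j)$ are bundled in $[P^{k,0,0}F]$, requiring an extraction step.  Writing out (\ref{Pk000degen}) at $g = a_{y_1,y_2}$, each summand carries the factor $y_2^{1-(w_j\l)_3}$, and by (\ref{generalposition}) the three exponents $1-\l_1,1-\l_2,1-\l_3$ are distinct.  Evaluating $[P^{k,0,0}F](a_{y_1,y_2^{(i)}})$ at three distinct $y_2^{(i)}$'s and inverting the resulting Vandermonde-type $3\times 3$ system expresses each $d(k,0;j)(ky_1)^{1-(w_j\l)_3/2}\,K_{((w_j\l)_1-(w_j\l)_2)/2}(2\pi ky_1)$ as a fixed linear combination of those evaluations, uniformly bounded in $k$ by the continuity argument of Part~1.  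The $K$-Bessel asymptotic in (\ref{IKnuasympt}) then gives $|d(k,0;j)|\ll k^{M}e^{2\pi ky_1}$, and the choice $y_1 = \e/(2\pi)$ produces the required $O_\e(e^{\e k})$.  The statement for $d(0,\ell;j)$ follows by the contragredient symmetry (\ref{contragredient}).

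Part~3 is where the hypothesis enters, furnishing a $g$-dependent constant $B(g)$ with $|[P^{k,0,\ell}F](\ttwo\gamma 001 g)|\le B(g)$ uniformly over $\gamma$.  Combining (\ref{Pk0ellW}) with (\ref{remainingellsum1}) and the transformation law of $W_\l$ gives, for every $\gamma$, the inequality $|c(k,\ell)|\,|W_\l(a_{ky_1/\d^2,\ell y_2\d})|\le B(a_{y_1,y_2})$ with $\d = \sqrt{\gamma_{21}^2 y_1^2+\gamma_{22}^2}$.  Assuming WLOG $k\ge\ell$, take $y_1 = y_2 = \tau$ and select $\gamma$ whose bottom row is $(1,n)$ with $n$ the nearest positive integer to the balancing value $(2k/\ell)^{1/3}$; then $\d\approx n$, both arguments of $W_\l$ are $\asymp\tau\,k^{1/3}\ell^{2/3}$, and part~1 of \lemref{lem:estimatesonW} yields $|W_\l|\gg e^{-C\tau k^{1/3}\ell^{2/3}}$ for some $C = C(\l)>0$.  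Choosing $\tau$ proportional to $\e/C$ gives the stated bound, and the case $\ell>k$ follows symmetrically using (\ref{SL3Fourexpcontra}) and the hypothesis on $[P^{k,0,\ell}F](\ttwo 100\gamma g)$.  The main delicate point is that $\d$ is effectively a positive integer for small $\tau$, so only discrete values are accessible; however, approximating the continuous optimum by the nearest integer loses only a bounded multiplicative factor in the exponent, which is absorbed because $\tau$ can be chosen anywhere in a positive interval of values.
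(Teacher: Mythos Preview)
Your proof follows the same template as the paper's: bound the integral defining $[P^{k,0,\ell}F](a_{y_1,y_2})$ trivially (it is uniformly bounded on compacta in $g$, uniformly in $k,\ell$, directly from (\ref{proj7})), then divide by the lower bound on the Whittaker factor from \lemref{lem:estimatesonW} or (\ref{IKnuasympt}), with $y_1,y_2$ taken small so that the exponential decay rate of the denominator becomes the $\e$ in the conclusion; your explicit Vandermonde inversion in Part~2 and your choice of $\gamma$ with bottom row $(1,n)$, $n\approx(k/\ell)^{1/3}$, in Part~3 match the paper's argument almost exactly.  The one point you should tighten is that part~1) of \lemref{lem:estimatesonW} requires both arguments of $W_\l$ to exceed $Y_0$, so in Part~1 the range where (say) $|\ell|\tau\le Y_0$ is not covered by your choice $y_1=y_2=\tau$; the paper handles this in (\ref{oneell}) by fixing each such $\ell$, taking $y_2>Y_0/|\ell|$ while keeping $y_1$ small, and then combining those finitely many one-variable estimates with the two-variable one.
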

\begin{proof}
First we prove coefficient bounds using the fact that $[P^{k,0,\ell}F](g)$ is uniformly bounded on compacta in $g$, uniformly for $k,\ell\neq 0$, which is immediate from
applying absolute values to the integral in (\ref{proj7}).
First take $g$ to have the form  $g=a_{y_1,y_2}$, where   $y_2$ is at least the constant $Y_0$ guaranteed by part 1) of \lemref{lem:estimatesonW}.  Taking $y_1$ sufficiently small (depending on $\e$) and comparing (\ref{Pk0ellW}) with (\ref{lemmalowerbd}) results in the estimates
 \begin{equation}\label{oneell}
    c(k,\ell) \ = \ O_{\e,\ell}(e^{\e |k|})  \ \   \text{for fixed $\ell$} \ \ \   \ \ \text{and} \  \  \  \   c(k,\ell) \ = \ O_{\e,k}(e^{\e |\ell|}) \   \ \text{for fixed $k$}\,,
 \end{equation}
 the second bound following from the same logic but reversing the roles of $y_1$ and $y_2$.
  Taking both $y_1$ and $y_2$ to be sufficiently small gives the bound $c(k,\ell)=O_{\e}( e^{\e |k|+\e |\ell|})$ for sufficiently large $|k|$ and $|\ell|$.  Part 1) follows by combining these estimates.

The proof of part 2) is slightly more complicated because (\ref{Pk000degen}) is the sum of three different terms.  Inserting formula (\ref{Mdegen1d}) into it and evaluating at $g=a_{y_1,y_2}$ gives the formula
\begin{equation}\label{Fourcoeffbound1}
\aligned~~
    [P^{k,0,0}F](a_{y_1,y_2}) \ \ = \ \ &
    d(k,0;1)\,|k y_1|^{1-\l_3/2}\,|y_2|^{1-\l_3}\,K_{(\l_1-\l_2)/2}(2\pi|ky_1|)
    \\ &  \ + \
    d(k,0;2)\,|k y_1|^{1-\l_2/2}\,|y_2|^{1-\l_2}\,K_{(\l_3-\l_1)/2}(2\pi|ky_1|) \\ &  \ \  \ + \
    d(k,0;3)\,|k y_1|^{1-\l_1/2}\,|y_2|^{1-\l_1}\,K_{(\l_2-\l_3)/2}(2\pi|ky_1|)\,,
\endaligned
\end{equation}
an expression which is again bounded in $k$ when $y_1$ and $y_2$ are constrained to any fixed  compact set.
Each of the three terms on the righthand side is a power of $|y_2|$ times a function of $y_1$ and $k$.  Since the powers $|y_2|^{1-\l_j}$, $j=1,2,3$, are linearly independent due to our assumption (\ref{generalposition}), these three terms are each individually bounded in $k$ for any fixed $y_1,y_2>0$.  Then
taking $y_1$ to be arbitrarily small as in the proof of part 1) and using the lower bound in the asymptotics (\ref{IKnuasympt}) results in the bound claimed in part 2).

Because of the contragredient symmetry it suffices to prove 3) when $|k|\ge |\ell|$, in which case its estimate reads $c(k,\ell)=O_{\e}(e^{\e |k|^{1/3}|\ell|^{2/3}})$. The assumed boundedness and (\ref{Pk0ellW}) give the estimate
\begin{equation}\label{Fourcoeffbound2}
    \left|\,c(k,\ell)\,W_\l\(a_{k,\ell}\ttwo{\g}{0}{0}1 a_{y_1,y_2}\)\,\right| \ \ = \ \ O_{y_1,y_2}(1)
\end{equation}
for any $k,\ell\neq 0$ and $\g\in \G_\infty^{(2)}\backslash \G^{(2)}$.  Using (\ref{remainingellsum1}) and the transformation law (\ref{proj8}), the left-hand side has absolute value $|c(k,\ell)W_\l(a_{\f{|k|y_1}{\d(\g,iy_1)^2},|\ell|y_2 \d(\g,iy_1)})|$.  Take $y_1<1$ and  choose $\g\in SL(2,\Z)$ so that $\f{1}{2}|\f{k}{\ell}|^{1/3} < \d(\g,iy_1) < 2|\f{k}{\ell}|^{1/3}$.\footnote{The existence of such a $\g$ is equivalent to that of a relatively prime pair of integers $(c,d)$ for which $c^2y_1^2+d^2$ lies in the interval $(\f{1}{4}|\f{k}{\ell}|^{2/3},4|\f{k}{\ell}|^{2/3})$;   since $|k|\ge|\ell|$ and $y_1<1$,  one can take $(c,d)=(0,1)$  when $|\f{k}{\ell}|<8$ and   $(c,d)=(1,\lceil  |\f k\ell|^{1/3}\rceil)$  when $|\f{k}{\ell}|\ge 8$.}  Then
\begin{equation}\label{Fourcoeffbound3}
\aligned
 4\i\,|k|^{1/3}\,|\ell|^{2/3}\,y_1  \ \ &  < \ \  \ \ \ \ \f{|k|\,y_1}{\d(\g,iy_1)^2} \ \ & <  \ \ 4\,|k|^{1/3}\,|\ell|^{2/3}\,y_1
\\
 \text{and}
 \ \ \ \  \
  2\i\,|k|^{1/3}\,|\ell|^{2/3}\,y_2 \ \ &  < \  \ \ \ \ |\ell|\,y_2\, \d(\g,iy_1) \ \  & < \ \ 2\,|k|^{1/3}\,|\ell|^{2/3}\,y_2\,.
 \endaligned
\end{equation}
As $\ell\neq 0$, $|k|^{1/3}|\ell|^{2/3}\ge |k|^{1/3}$; hence for sufficiently large values of $|k|$ (depending only on $y_1$, $y_2$, and $\l$),
 part 1) of \lemref{lem:estimatesonW} guarantees  that
\begin{equation}\label{Fourcoeffbound4}
    \left|\,W_\l(a_{\f{|k|y_1}{\d(\g,iy_1)^2},|\ell|y_2 \d(\g,iy_1)})\,\right| \ \ \gg \ \ e^{-c_1|k|^{1/3}|\ell|^{2/3}(y_1+y_2)}
\end{equation}
for some constant $c_1>0$ depending only on $\lambda$.  The claimed estimate now follows from (\ref{Fourcoeffbound2}) by choosing small enough $y_1$ and $y_2$ so that $c_1(y_1+y_2)<\e$.

\sdmnote{We don't use part 4) so I'll fix the statement and complete  the proof later}

\end{proof}

It is clear that if $F\in C^\infty(SL(3,\Z)\backslash SL(3,\R)/SO(3,\R))$ is an automorphic eigenfunction of moderate growth, then the unipotent Fourier coefficients 
$[P^{k,0,\ell}F](g)$
defined  in (\ref{proj7}) all inherit this moderate growth.  The following Lemma presents a converse, but under the somewhat unsatisfactory assumption d) below on the growth of $F$'s abelian Fourier coefficients $c(k,\ell)$:

\begin{lem}\label{lem:modgrowthnoMs}
Suppose that an automorphic eigenfunction $F$ on the quotient $SL(3,\Z)\backslash SL(3,\R)/SO(3,\R))$ satisfies
\begin{enumerate}
\item[a)] (\ref{Pk000degen})  for all $k\neq 0$,
\item[b)] (\ref{Pk000degena2})  for all $\ell\neq 0$, and
\item[c)]    (\ref{Pk0ellW})  for all $k,\ell\neq 0$.
\end{enumerate}
Suppose furthermore that
\begin{enumerate}
\item[d)] $F$'s abelian Fourier coefficients $c(k,\ell)$ satisfy a bound of the form $c(k,\ell)=O(e^{\f{1}{8} \max(|k|,|\ell|)^{1/3}\min(|k|,|\ell|)^{2/3}})$.
\end{enumerate}
Then $F$ satisfies the moderate growth condition (\ref{thmpunch}).
\end{lem}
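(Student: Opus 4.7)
The plan is to bound $|F(g)|$ by a polynomial in $y_1 = a_1/a_2$ and $y_2 = a_2/a_3$ on the region $y_1, y_2 \geq \sqrt{3}/2$, by inserting the formulas (\ref{Pk000degen}), (\ref{Pk000degena2}), (\ref{Pk0ellW}) from hypotheses (a)--(c) into the Piatetski-Shapiro/Shalika expansion (\ref{SL3Fourexp}) and estimating each piece.  Write $F(g) = T_0 + T_1 + T_2 + T_3$, where $T_0 = [P^{0,0,0}F](g)$, $T_1 = \sum_{k\neq 0}[P^{k,0,0}F](g)$, $T_2 = \sum_{\ell\geq 1}\sum_\gamma[P^{0,0,\ell}F](\ttwo{\gamma}{0}{0}{1} g)$, and $T_3 = \sum_{\ell\geq 1}\sum_\gamma\sum_{k\neq 0} c(k,\ell) W_\lambda(a_{k,\ell}\ttwo{\gamma}{0}{0}{1} g)$, having already invoked (c) in $T_3$.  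The piece $T_0$ is a Weyl-orbit linear combination of $a(g)^{w\lambda+\rho}$, hence of polynomial growth.  For $T_1$, expand each summand via (\ref{Pk000degen}) as a linear combination of three $W_{\text{degen},w\lambda}^{\alpha_1}(a_{k,1}g)$; formula (\ref{Mdegen1d}) yields the exponential decay $e^{-2\pi|k|y_1}$, which combined with the subexponential bound $|d(k,0;j)| = O_\e(e^{\e|k|})$ from Lemma~\ref{lem:Fouriercoeffbound}(2) (taking $\e$ well below $2\pi y_1$) gives absolute convergence to moderate growth.  The piece $T_2$ is handled in parallel using (b) and the transformation (\ref{remainingellsum1}), which replaces the $K$-Bessel argument by $2\pi|\ell|y_2\delta(\gamma,x+iy_1)$; since $\delta\geq |\gamma_{21}| y_1$ the sum over $\gamma$ (with $O(c)$ cosets for each value $c = |\gamma_{21}|\geq 1$) converges rapidly.

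The main term $T_3$ is the crux.  Using (\ref{remainingellsum1}) and the transformation law (\ref{proj8}), each summand equals (up to a unit) $c(k,\ell)\,W_\lambda(a_{u,v})$ with $u = |k|y_1/\delta^2$, $v = |\ell|y_2\delta$, and $\delta = \delta(\gamma,x+iy_1) \geq \sqrt 3/2$ in our region.  Apply $|W_\lambda(a_{u,v})| \ll (uv)^{-N}e^{-\pi(u+v)}$ from Lemma~\ref{lem:estimatesonW}(2) together with the hypothesis $|c(k,\ell)|\ll e^{\max(|k|,|\ell|)^{1/3}\min(|k|,|\ell|)^{2/3}/8}$, and split the inner sum into $|k|\geq |\ell|$ and $|k|\leq |\ell|$.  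When $|k|\geq |\ell|$, the identity
$|k|^{1/3}|\ell|^{2/3} = u^{1/3}v^{2/3}\cdot y_1^{-1/3}y_2^{-2/3}$
is $\delta$-independent; Young's inequality $u^{1/3}v^{2/3}\leq u/3 + 2v/3$ combined with $y_1^{-1/3}y_2^{-2/3}\leq 2/\sqrt 3$ shows that the hypothesized growth is strictly dominated by $e^{-\pi(u+v)}$, so $|c(k,\ell)\,W_\lambda(a_{u,v})|\ll e^{-c_1(u+v)}$ uniformly in $\gamma$.  When $|k|\leq |\ell|$, I abandon Young and instead use the crude bound $|c(k,\ell)|\ll e^{|\ell|/8}$ against the $O(|\ell|)$ count of such $k$ and the decay $e^{-\pi v} = e^{-\pi y_2|\ell|\delta}$; since $\pi y_2\delta - \tfrac{1}{8}\geq \tfrac{3\pi}{4} - \tfrac{1}{8} > 0$ on our region, this contributes $O(|\ell|\,e^{-c_2|\ell|\delta})$.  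Combining the two regimes, the inner $k$-sum is $\ll \mathrm{poly}(\delta,|\ell|,y_1,y_2)\,e^{-c_3|\ell|\delta}$ per pair $(\gamma,\ell)$; summing over $\ell\geq 1$ is then a geometric series, and summing over $\gamma$ (using $\delta\geq |\gamma_{21}| y_1$ and $O(|\gamma_{21}|)$ cosets per value of $|\gamma_{21}|$) converges absolutely to a polynomial in $y_1, y_2$.

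The main obstacle is exactly the $|k|\leq |\ell|$, $\delta$-large subregime: the analogous rewriting
$|\ell|^{1/3}|k|^{2/3} = u^{2/3}v^{1/3}\cdot y_1^{-2/3}y_2^{-1/3}\cdot \delta$
carries a spurious factor of $\delta$ that defeats any direct Young-based bound for large $\delta$.  The workaround is the one described above --- trade the $O(|\ell|)$ count of $k$ against the strong $e^{-\pi y_2|\ell|\delta}$ decay --- and it works precisely because the constant $\tfrac{1}{8}$ in hypothesis (d) is strictly smaller than $\pi y_2$ throughout the region $y_2\geq \sqrt 3/2$, making $\pi y_2\delta - \tfrac{1}{8}$ bounded below by a positive constant.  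The hypothesis (d) is thus calibrated to be just mild enough to be absorbed by the Whittaker decay in the $v$-direction, which is what ultimately forces moderate growth of $F$.
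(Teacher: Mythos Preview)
Your overall strategy is sound and, for the main term $T_3$, arguably cleaner than the paper's.  The paper handles the triple sum over $k,\ell,\gamma$ by a three-way split $\mathcal S_1\cup\mathcal S_2\cup\mathcal S_3$ according to whether $0<\ell\le k\le 27y_2^3\delta^3$, $0<\ell\le k$ with $k>27y_2^3\delta^3$, or $\ell>k$, and then maximizes the exponent in each region by calculus.  Your key observation that
\[
|k|^{1/3}|\ell|^{2/3}\;=\;u^{1/3}v^{2/3}\cdot(y_1y_2^2)^{-1/3}
\]
is \emph{independent of $\delta$} is not in the paper and lets you avoid the $\mathcal S_1/\mathcal S_2$ split entirely: Young's inequality then gives $\tfrac18|k|^{1/3}|\ell|^{2/3}\le c(u+v)$ with $c\ll\pi$ uniformly on the Siegel set, so the hypothesis (d) is absorbed directly into the Whittaker decay.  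This is a genuine simplification.

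That said, there are two imprecisions you should repair.  First, the repeated claim that there are ``$O(c)$ cosets per value of $c=|\gamma_{21}|$'' is false: cosets in $\Gamma_\infty^{(2)}\backslash\Gamma^{(2)}$ are indexed by coprime pairs $(c,d)$, and for each fixed $c\ge 1$ there are infinitely many $d$.  What you actually need (and what the paper uses) is a lattice-point count: the values $\ell\,\delta(\gamma,x+iy_1)$ are exactly the norms of nonzero vectors in the lattice $\Z+\Z(x+iy_1)$, so $\sum_{\gamma}\mathrm{poly}(\delta)e^{-c\delta}$ converges with the right dependence on $y_1$.  Your conclusion is correct, but the justification as written is not.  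Second, you silently drop the $(uv)^{-N}$ prefactor from Lemma~\ref{lem:estimatesonW}(2); since $u=|k|y_1/\delta^2$ can be small, this contributes an extra $\delta^{2N}$ that must be tracked before it is killed by the exponential.  Neither issue is fatal---both are bookkeeping---but they should be stated explicitly.
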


\noindent {\bf Remark:}  As we mentioned in the introduction, the presence of assumption d) prevents us from proving the full Miatello-Wallach conjecture for $SL(3,\Z)\backslash SL(3,\R)/SO(3,\R)$; instead, we only rule out non-decaying Whittaker functions.   Part 3) of \lemref{lem:Fouriercoeffbound} shows that when the terms in
 the Fourier expansion (\ref{introSL3Fourexp}) are bounded for any fixed $g$, then assumption d) holds -- in fact, with $\f 18$ replaced by any arbitrary positive constant.   The actual value of $\f 18$ in part d) is not optimal and was chosen to simplify the proof.
 Assumptions such as d) on the growth of Fourier coefficients do not appear in $SL(2)$ theory, nor in the situations considered by \cite{Gotzky,Kocher,MW}, because the relevant unipotent radicals there are abelian.

\begin{proof}
Consider the first line of (\ref{introSL3Fourexp}), keeping in mind the characterization of $[P^{k,0,\ell}F](g)$ from \secref{sec:Whittaker} in terms of linear combinations of special functions.
The assertion  (\ref{thmpunch}) is unchanged if $g$ is replaced by some left $N(\Z)$-translate, so write $g$ in its Iwasawa factorization as $g=n(g)a(g)k(g)$, where  $n(g)=\tthree 1xz01y001$ with $|x|\le 1/2$,   $a(g)=a_{y_1,y_2}$ with  $y_1,y_2\ge \smallf{\sqrt{3}}2$, and $k(g)\in K=SO(3,\R)$.

The term $[P^{0,0,0}F](g)$ is in the linear span of  $\{a(g)^{w\lambda+\rho}|w\in \Omega\}$ and hence has moderate growth.  Since (\ref{Pk000degen}) is assumed to hold for all $k\neq 0$, the estimate in  part 2) of \lemref{lem:Fouriercoeffbound},  formula (\ref{Fourcoeffbound1}), and the asymptotic (\ref{IKnuasympt}) together imply the estimate
\begin{equation}\label{modgrowthnoMs1}
    |[P^{k,0,0}F](g)| \ \ = \ \  |[P^{k,0,0}F](a_{y_1,y_2})| \ \ \ll_\e \ \ e^{\e |k|}\,|k\,y_1\,y_2|^p\,
    e^{-2\pi|k|y_1}\,,
\end{equation}
for any $\e>0$, where $p>0$ depends only on $\l$.  By taking $\e=1/2$, it follows
$\sum_{k\neq 0}|[P^{k,0,0}F](g)|$ has moderate growth for $y_1,y_2\ge \f{\sqrt{3}}{2}$.

Next we consider the  second sum in (\ref{introSL3Fourexp}), starting with terms corresponding to $\ell>0$ and $k=0$,
\begin{equation}\label{remainingsum9}
     \sum_{\ell\,=\,1}^\infty\,\sum_{\g\in\G_\infty^{(2)}\backslash \G^{(2)}} [P^{0,0,\ell}F]\(\ttwo{\gamma}{0}{0}{1} g\).
\end{equation}
Using (\ref{remainingellsum1}), (\ref{Wdegendefa2}), (\ref{Pk000degena2}), and  part 2) of \lemref{lem:Fouriercoeffbound}, we see that it is sufficient to show the moderate growth of
\begin{multline}\label{remainingsum10}
    \sum_{\ell\,=\,1}^\infty\,\sum_{\g\in\G_\infty^{(2)}\backslash \G^{(2)}} e^{\e|\ell|}\,W^{\a_2}_{\text{degen},\l}\(a_{1,\ell}\ttwo{\gamma}{0}{0}{1} g\) \ \ = \\
      \sum_{\ell\,=\,1}^\infty\,\sum_{\g\in\G_\infty^{(2)}\backslash \G^{(2)}}
      \frac{e^{\e|\ell|}
      (\smallf{y_1}{\d(\g,x+iy_1)^2})^{1+\l_1}}{ (|\ell|y_2\d(\g,x+iy_1))^{-1-\l_1/2}} K_{(\l_2-\l_3)/2}(2\pi|\ell|y_2\d(\g,x+iy_1) )
\end{multline}
for some choice of $\e>0$.
Inserting the upper bound from (\ref{IKnuasympt}) and
noting that the values of  $|\ell \d(\g,x+iy_1)|$ which occur are precisely the norms of nonzero vectors in the lattice spanned by $1$ and $x+iy_1$, this moderate growth is then immediate.

 The remaining terms are those in  the $\ell$-sum in (\ref{introSL3Fourexp})  having $k\neq 0$; we will show that their contribution has moderate growth for $y_1,y_2\ge \f{\sqrt{3}}{2}$.  Using (\ref{remainingellsum1}) and part 2) of \lemref{lem:estimatesonW}   results in the bound
\begin{multline}\label{remainingellsum2}
   |[P^{k,0,\ell}F](a_{k,\ell}\ttwo{\gamma}{0}{0}{1} g)|
    \ \ = \ \
    |c(k,\ell)\,W_\l\!\(a_{k,\ell}\ttwo{\gamma}{0}{0}{1} g\)| \\
    = \ \  \left|c(k,\ell)\, W_\l\!\(a_{\f{|k|y_1}{\d(\g,x+iy_1)^2},|\ell| y_2 \d(\g,x+iy_1)} \)\right| \\   \ll_{\e,N} \ \
    |c(k,\ell)|\,|\smallf{\d(\g,x+iy_1)}{k\ell y_1 y_2}|^{N}\,\exp\(-\pi\(\f{|k|y_1}{\d(\g,x+iy_1)^2}+|\ell| y_2 \d(\g,x+iy_1)\)\) \\
    \ll_{\e,N} \ \    |c(k,\ell)| \,\exp\(-\smallf{1}{4}\f{|k|y_1}{\d(\g,x+iy_1)^2}-\smallf{1}{4}|\ell| y_2 \d(\g,x+iy_1)\) , \ \
    y_1,y_2\ge \smallf{\sqrt{3}}{2},
\end{multline}
where  $N>0$ is sufficiently large (depending only on $\l$); here we have used 
$|\smallf{\d(\g,x+iy_1)}{k\ell y_1 y_2}|\le \frac{8}{3^{3/2}}|\ell y_2 \d(\g,x+iy_1)|$ and $-\f{1}{4}>-\f{\pi}{2}$.  Recall that $k$ is summed over nonzero integers,    $\ell$ is summed over positive integers, and  $\g$ is summed over the cosets for $\G^{(2)}_\infty\backslash \G^{(2)}$.  The subexponential estimate in part 1) of \lemref{lem:Fouriercoeffbound} guarantees that the $k,\ell$ sum for $\g=\ttwo 1001$,
    $\sum_{k\neq 0,\ell>0} |c(k,\ell)|\exp(-\smallf{1}{4}|k|y_1-\smallf{1}{4}|\ell|y_2)$, is bounded for $y_1,y_2\ge \f{\sqrt{3}}{2}$.

At this point assumption d) is crucially needed.
To simplify notation, index the coset representatives $\g=\g_{c,d}$ by their bottom row $[c\,d]$, and write $\d_{c,d}(z)=\d(\g_{c,d},z)=|cz+d|$, where $z=x+iy_1$.
%[[do we use this? Since $y_1\ge \f{\sqrt{3}}{2}$, $\d_{c,d}^2=(cx+d)^2+c^2y_1^2=c^2(x^2+y_1^2)+d^2+2cxd\ge \f{3}{4}c^2+d^2-|cd|\ge \f{3}{4}c^2+d^2-\f{c^2+d^2}{2}\ge \f{1}{4}(c^2+d^2)$.  ]]
Taking absolute values in (\ref{introSL3Fourexp})  and applying assumption d) to  (\ref{remainingellsum2}), we see that the moderate growth condition (\ref{thmpunch}) is implied by that for the sum
\begin{equation}\label{remainingsum3}
   \Sch \ \ = \ \  \sum_{k,\ell\,>\,0}\sum_{\srel{\g_{c,d}\,\in\,\G^{(2)}_\infty\backslash \G^{(2)}}{\g_{c,d}\,\not\in\,\Gamma_\infty^{(2)}}}e^{\f{1}{8} \max(k,\ell)^{1/3}\min(k,\ell)^{2/3}}e^{-(ky_1\d_{c,d}(z)^{-2}+\ell y_2 \d_{c,d}(z))/4}\,,
\end{equation}
where we have used the fact $c(k,\ell)=c(-k,\ell)$ (see the discussion following (\ref{Pklsigns})). 
Break up the sum $\Sch$ as $\Sch_1+\Sch_2+\Sch_3$, where
$\Sch_1$ is the sum restricted to $0<\ell\le k\le 27 y_2^3\d_{c,d}(z)^3$,
$\Sch_2$ is the sum restricted to $0<\ell\le k> 27 y_2^3\d_{c,d}(z)^3$, and
$\Sch_3$ is the sum restricted to $\ell> k>0$.

Let us first consider
\begin{equation}\label{remainingsum4}
    \Sch_1 \ \ = \ \ \sum_{\srel{\g_{c,d}\,\in\,\G^{(2)}_\infty\backslash \G^{(2)}}{\g_{c,d}\,\not\in\,\Gamma_\infty^{(2)}}}\sum_{\srel{0\,<\,k\,\le\,27y_2^3 \d_{c,d}(z)^3}{0\,<\,\ell\,\le\, k}}
    e^{\smallf{1}{8}k^{1/3}\ell^{2/3}-\smallf{1}{4}ky_1\d_{c,d}(z)^{-2}-\smallf{1}{4}\ell y_2 \d_{c,d}(z)}.
\end{equation}
The argument of the exponential is maximized in $\ell>0$ at $\ell=\f{k}{(3 y_2\d_{c,d}(z))^3}\le 1$,  hence it decreases for $\ell\ge 1$ and the summand is bounded by $\exp(\f{1}{8}k^{1/3}-\f{1}{4}ky_1\d_{c,d}(z)^{-2}-\f{1}{4}y_2\d_{c,d}(z))$.  This last expression is maximized in $k>0$ at $k=\f{\d_{c,d}(z)^3}{(6y_1)^{3/2}}$, so
\begin{equation}\label{remainingsum5}
\aligned
     \Sch_1 \ \ & \le \ \ \sum_{\srel{\g_{c,d}\,\in\,\G^{(2)}_\infty\backslash \G^{(2)}}{\g_{c,d}\,\not\in\,\Gamma_\infty^{(2)}}} (3\,y_2\,\d_{c,d}(z))^6\,\exp\(-\smallf{1}{72}(18y_2-\sqrt{\smallf{6}{y_1}})\d_{c,d}(z)\) \\
     & \le \ \ (3\,y_2)^6 \sum_{(c,d)\,\in\,\Z^2_{\neq(0,0)}}|c(x+iy_1)+d|^6\, e^{-|c(x+iy_1)+d|/12}
\endaligned
\end{equation}
for  $y_1,y_2\ge \f{\sqrt{3}}{2}$; thus $\mathcal S_1$ has moderate growth in this range.

Next,
the exponent   in
\begin{equation}\label{remainingsum6}
     \Sch_2 \ \ = \ \ \sum_{k\,>\,0}
     \sum_{\srel{\g_{c,d}\,\in\,\G^{(2)}_\infty\backslash \G^{(2)}}{\srel{\g_{c,d}\,\not\in\,\Gamma_\infty^{(2)}}{\d_{c,d}(z)\,<\,\f{k^{1/3}}{3y_2}}}}
     \sum_{0\,<\,\ell\,\le\,k}
     \exp\(\smallf{1}{8}k^{1/3}\ell^{2/3}-\smallf{1}{4}ky_1\d_{c,d}(z)^{-2}-\smallf{1}{4}\ell y_2 \d_{c,d}(z)\)
\end{equation}
is again maximized for $\ell>0$ at $\ell=\f{k}{(3 y_2\d_{c,d}(z))^3}$, where takes the value $-\f{k}{216 y_2^2 \d_{c,d}(z)^2}(54y_1y_2^2-1) \le -\f{ky_1}{8\d_{c,d}(z)^2}$ (since $27 y_1y_2^2>1$ for $y_1,y_2\ge \f{\sqrt{3}}{2}$).  As  $-\f{ky_1}{8\d_{c,d}(z)^2}<-\f 98 k^{1/3}y_1y_2^2$ for $\d_{c,d}(z)<\f{k^{1/3}}{3y_2}$,
\begin{equation}\label{remainingsum7a}
    \Sch_2 \ \ \le \ \
    \sum_{k\,>\,0} k\,N\!(x+iy_1,\smallf{k^{1/3}}{3y_2})\,\exp(-\smallf 98 k^{1/3}y_1y_2^2)\,,
\end{equation}
where $N\!(z,T)$ counts the number of $(c,d)\in \Z^2_{\neq(0,0)}$ for which $\d_{c,d}(z)^2=|(cx+d)^2+c^2y_1^2|<T^2$.  Since the quantity $\f{(cx+d)^2+c^2y_1^2}{c^2+d^2}$ is bounded below for $|x|\le 1/2$ and  $y_1\ge \f{\sqrt{3}}{2}$, $N\!(z,T)=O(T^2)$ and we conclude
\begin{equation}\label{remainingsum7b}
    \Sch_2 \ \ \ll \ \
    \sum_{k\,\neq\,0}\f{k^{5/3}}{y_2^2}\exp(-\smallf{27}{64}\sqrt{3}\,k^{1/3}) \ \ < \ \ \infty
\end{equation}
for $y_1,y_2\ge \f{\sqrt{3}}{2}$.

Finally,
\begin{multline}\label{remainingsum8}
   \Sch_3 \ \ = \ \ \sum_{\srel{\g_{c,d}\,\in\,\G^{(2)}_\infty\backslash \G^{(2)}}{\g_{c,d}\,\not\in\,\Gamma_\infty^{(2)}}}\sum_{\ell\,>\,0}\sum_{0\,<\,k\,<\,\ell}
    \exp\(\smallf{1}{8}k^{2/3}\ell^{1/3}-\smallf{1}{4}ky_1\d_{c,d}(z)^{-2}-\smallf{1}{4}\ell y_2 \d_{c,d}(z)\) \\
    <  \ \ \sum_{\srel{\g_{c,d}\,\in\,\G^{(2)}_\infty\backslash \G^{(2)}}{\g_{c,d}\,\not\in\,\Gamma_\infty^{(2)}}}\sum_{\ell\,>\,0}\sum_{0\,<\,k\,<\,\ell}
    \exp\(\smallf{1}{8}k^{2/3}\ell^{1/3}-\smallf{1}{4} \ell y_2 \d_{c,d}(z)\) \\
    <  \ \ \sum_{\srel{\g_{c,d}\,\in\,\G^{(2)}_\infty\backslash \G^{(2)}}{\g_{c,d}\,\not\in\,\Gamma_\infty^{(2)}}}\sum_{\ell\,\neq\,0}\ell
    \exp\(\smallf{1}{8}\ell^{2/3}\ell^{1/3}-\smallf{1}{4} \ell y_2 \d_{c,d}(z)\).
\end{multline}
This exponential's argument is $\f{\ell}{8}((1-\f{3}{2}y_2\d_{c,d}(z))-\f{1}{2}y_2\d_{c,d}(z))<-\f{\ell}{16}y_2\d_{c,d}(z)$ for $y_1,y_2\ge \f{\sqrt{3}}{2}$ (where we have used  $\d_{c,d}(z)\ge |c|y_1\ge \f{\sqrt{3}}{2}$ for $c\neq 0$).  Recalling that $\ell\d_{c,d}(z)$ is the norm of the nonzero lattice vector $\ell c(x+iy_1)+\ell d$, we conclude the last expression in (\ref{remainingsum8}) is  bounded for $y_1,y_2\ge \f{\sqrt{3}}{2}$.

\end{proof}

\section{Proof of \thmref{thm:convergence1}}\label{sec:proofofconverg}

Suppose to the contrary that for all fixed $g\in SL(3,\R)$ there is some constant bound on the terms in (\ref{introSL3Fourexp}), in particular  that the terms in (\ref{thmbddness}) are bounded when $g$ is taken to be the identity matrix.  We will show that (\ref{Pk000degen}), (\ref{Pk000degena2}), and (\ref{Pk0ellW}) hold, i.e., all $[P^{k,0,\ell}F](g)$ have moderate growth for $(k,\ell)\neq (0,0)$ as needs to be shown.  (Recall from \secref{sec:Whittaker} that  $[P^{0,0,0}F](g)$ always has moderate growth.)

From (\ref{remainingellsum1}) and the transformation properties in (\ref{proj8}) we have
\begin{equation}\label{proofofconverg1}
  \left|[P^{k,0,\ell}F]\(\ttwo{\g}001 g\)  \right| \  \ = \ \ \left|
  [P^{k,0,\ell}F]\( a_{\d(\g,i)^{-2}, \d(\g,i)}\)
  \right|,
\end{equation}
with  $\d\(\ttwo{\g_{11}}{\g_{12}}{\g_{21}}{\g_{22}},i\)=|\g_{21}i+\g_{22}|=\sqrt{\g_{21}^2+\g_{22}^2}$ growing to infinity as $\gamma$ varies. Using this and the contragredient symmetry gives a constant bound (for any fixed $(k,\ell)\neq(0,0)$) on $[P^{k,0,\ell}F](a_{t^{-2},t})$ and $[P^{k,0,\ell}F](a_{t,t^{-2}})$  over an infinite sequence of values of $t\rightarrow\infty$, which
Theorem~\ref{thm:Tien} shows implies (\ref{Pk0ellW}) when both $k$ and $\ell$ are nonzero.

It remains only to show (\ref{Pk000degena2}), since (\ref{Pk000degen}) is equivalent under the contragredient symmetry (\ref{contragredient}) which interchanges the roles of $k$ and $\ell$.   Thus we consider $k=0$ but $\ell\neq 0$, so that
  (\ref{Pl000}) exhibits $[P^{0,0,\ell}F](a_{y_1,y_2})$ as a linear combination $\sum_{w\in\Omega}c(0,\ell;w)
{\cal M}^{\a_2}_{\text{degen},w\l}(a_{y_1,\ell y_2})$  of the degenerate Whittaker functions
${\cal M}^{\a_2}_{\text{degen},w\l}$ defined in (\ref{Mdegendef2}).  Now,  (\ref{IKnuasympt}) and (\ref{Mdegendef2}) show  
\begin{equation}\label{5.2}
 {\cal M}^{\a_2}_{\text{degen},\l}(a_{t^{-2},\ell t}) \ \ \sim \ \ (2\pi)\i |\ell|^{(1+\l_1)/2} t^{-3(1+\l_1)/2}e^{2\pi |\ell| t}
\end{equation}
 for $t$ large.  Since we have assumed (\ref{generalposition}), the $\lambda_i$ are distinct; in order to cancel the exponential growth, the boundedness of (\ref{proofofconverg1}) forces   $c(0,\ell;(23)w)=-c(0,\ell;w)$ for all $w\in \Omega$, a condition   equivalent to (\ref{Pk000degena2}).
    \bx

\section{Proof of \thmref{thm:main}}\label{sec:proofofbound}

As in \secref{sec:proofofconverg}, we must again show (\ref{Pk000degen}), (\ref{Pk000degena2}), and (\ref{Pk0ellW}) for $(k,\ell)\neq (0,0)$.  In fact we need only prove the latter two assertions, i.e., the cases with $\ell \neq 0$, since the contragredient symmetry (\ref{contragredient}) interchanges the roles of $k$ and $\ell$ without affecting the assumptions and conclusions of \thmref{thm:main}.  The bound (\ref{thmcond}) on $F$ is  inherited by the  Fourier coefficients $P^{m,n}F$ and $P^{m,0,n}F$ through the unipotent integrations (\ref{proj1}) and (\ref{proj7}):
\begin{multline}\label{pfofbd1}
 \left| [P^{m,n}  F]\tthree{a_1}000{a_2}000{a_3}  \right|\,, \ \left| [P^{m,0,n}  F]\tthree{a_1}000{a_2}000{a_3}  \right| \\  \le \ \
 C\,\exp(K\smallf{a_1}{a_2}+K\smallf{a_2}{a_3}) \, , \ \ a_1 \ge \smallf{\sqrt{3}}{2}a_2\ge \smallf 34 a_3 \ \text{and} \ x\,\in\,\R\,,
\end{multline}
where $C$ and $K$ depend only on $F$.
In particular, this estimate is uniform in $m$ and $n$.\footnote{Of course the pointwise estimate   $[P^{m,n}F](g)\rightarrow 0$ as $m^2+n^2\rightarrow\infty$ holds by the Riemann-Lebesgue Lemma.  We shall not require this fact, but remark that the tension between this decay and the growth of Whittaker functions appears to be a fundamental reason behind the truth of the Miatello-Wallach conjecture.}
It follows by comparison of the   inequality on $P^{m,0,n}F$ with the growth rates in  (\ref{pmlabels}) and \thmref{thm:To&Nicolas}, that   (\ref{Pk0ellW}) holds for all but finitely many pairs $(k,\ell)$.  Likewise,   (\ref{IKnuasympt}), (\ref{Mdegendef2}), and (\ref{Pl000})  show that (\ref{Pk000degena2}) holds for all but finitely many $\ell$.

%
%
%We must show that $P^{k,0,\ell}F$ is never a non-decaying Whittaker function.  Since $P^{0,0,0}F$ always has moderate growth, it suffices to assume that   $k$ and $\ell$ are both nonzero; in fact, we furthermore assume $\ell\neq 0$, as we may because    the contragredient
%symmetry (\ref{contragredient}) interchanges the roles of $k$ and $\ell$ without affecting the bound (\ref{thmcond}).
%Note  that this bound on $F$ is inherited by the  Fourier coefficients $P^{m,n}F$ and $P^{m,0,n}F$ through  unipotent integration:
%\begin{multline}\label{pfofbd1}
% \left| [P^{m,0,n}  F]\tthree{a_1}000{a_2}000{a_3}  \right|\,, \ \left| [P^{m,n}  F]\tthree{a_1}000{a_2}000{a_3}  \right| \\  \le \ \
% C\,\exp(K\smallf{a_1}{a_2}+K\smallf{a_2}{a_3}) \, , \ \ a_1 \ge \smallf{\sqrt{3}}{2}a_2\ge \smallf 34 a_3\,.
%\end{multline}
%In particular, this estimate is independent of $m$ and $n$.\footnote{Of course the stronger estimate   $[P^{m,n}F](g)\rightarrow 0$ as $m^2+n^2\rightarrow\infty$ holds by the Riemann-Lebesgue Lemma.  We shall not require this fact, but remark that the tension between this decay and the growth of Whittaker functions appears to be a fundamental reason behind the truth of the Miatello-Wallach conjecture.}

We now come to the key estimate of the argument.
If $\ell$ and $\g=\ttwo abcd\in SL(2,\Z)$ are related to $m$ and $n$ as in (\ref{proj4}), then
\begin{equation}\label{pfofbd2}
     [P^{0,\ell}F]\(\tthree ab0cd0001 \tthree{t}000{t}000{t^{-2}}\)
     \ \ =
     \ \
     [P^{m,n}F]\tthree{t}000{t}000{t^{-2}}
      \ \ \ll \ \  e^{K t^3}\,,  \ t> 1\,,
\end{equation}
with an implied  constant which is independent of $\ell$, $c$, and $d$.  The two matrices in the argument of $P^{0,\ell}F$ commute with each other, and so invoking (\ref{proj6}), (\ref{remainingellsum1}), and the $SO(3,\R)$ invariance of $P^{0,\ell}F$ results in the estimate
\begin{multline}\label{pfofbd34}
     [P^{0,\ell}F]\(\tthree{(c^2+d^2)^{-1/2}}{\theta_\g (c^2+d^2)^{1/2}}00{(c^2+d^2)^{1/2}}0001\tthree{t}000{t}000{t^{-2}}\) \ \ = \\
       \sum_{k\,\in\,\Z} [P^{k,0,\ell}F]\(\tthree{(c^2+d^2)^{-1/2}}{\theta_\g (c^2+d^2)^{1/2}}00{(c^2+d^2)^{1/2}}0001\tthree{t}000{t}000{t^{-2}}\) \ \ \ll \ \ e^{K t^3}\,,  \ t> 1
\end{multline}
where $\theta_\g=\f{ac+bd}{c^2+d^2}$ (coming from $\g$'s Iwasawa decomposition -- cf.~(\ref{remainingellsum1})).\footnote{This is readily deduced from $\ttwo{1}{\theta_\g}01 \ttwo{(c^2+d^2)\i}00{c^2+d^2}\ttwo 10{\theta_g}{1}= \g\g^t = \ttwo{a^2+b^2}{ac+bd}{ac+bd}{c^2+d^2}$.}  Roughly speaking, the $c^2+d^2$ factors  serve the amplify the growth rate of the left-hand side and cause it to violate (\ref{pfofbd34}), in a manner which we will make precise.

Fix  $\ell\neq 0$ and $(c,d)\neq (0,0)$.  Let $\mathcal S_\ell=\{k\neq 0|$(\ref{Pk0ellW}) does not hold for $(k,\ell)\}$.
 It is a consequence of (\ref{Pklsigns}) that $c(k,\ell,m)=c(-k,\ell,m)$, and that $k\in{\mathcal S}_\ell\Longleftrightarrow -k\in{\mathcal S}_\ell$.  The argument of $P^{k,0,\ell}F$ in (\ref{pfofbd34}) has Iwasawa $A$-component $a_{(c^2+d^2)\i,t^3\sqrt{c^2+d^2}}$.   By part 2) of Lemma~\ref{lem:estimatesonW} and part 1) of \lemref{lem:Fouriercoeffbound},
\begin{equation}\label{cklWbd}
  c(k,\ell)\,W_\l(a_{k(c^2+d^2)\i,\ell t^3\sqrt{c^2+d^2}}) \ \ = \ \ O_\e(e^{\e|k|-
 \pi|k|/(c^2+d^2)-\pi t^3})
\end{equation}
 for any $\e>0$ (since $|\ell|\sqrt{c^2+d^2}\ge 1$).  Thus the sum over  $k\notin{\mathcal S}_\ell\cup\{0\}$ in (\ref{pfofbd34})  tends to 0 rapidly as $t\rightarrow\infty$, and its estimate remains valid when the sum is restricted to the (finitely many) $k\in {\mathcal S}_\ell\cup\{0\}$.
    In terms of (\ref{Pl000}) and (\ref{PlokandMwl}),
  \begin{multline}\label{pfofbd5}
     \sum_{\srel{k\,\in\,\mathcal S_\ell}{k\,>\,0}}\left[ \sum_{m\,=\,1}^3 c(k,\ell,m)\,\(e^{2\pi i k \theta_\g}+e^{2\pi i (- k) \theta_\g}\)\,
     \phi^{(m)}_\l\(a_{\f{k}{c^2+d^2},\ell t^3\sqrt{c^2+d^2}}\)\right] \\
      + \ \
      \sum_{w\,\in\,\Omega}c(0,\ell;w){\mathcal M}^{\a_2}_{\text{degen},w\l}(a_{(c^2+d^2)\i,\ell t^3\sqrt{c^2+d^2}})
          \ \ \ll \ \ e^{K t^3}\,,  \ t> 1\,;
\end{multline}
 the reason the $m$-sum only includes terms for $m=1,2,3$ is that
 \thmref{thm:To&Nicolas} implies the decay of $\phi^{(m)}_\l\(a_{\f{ k }{c^2+d^2}, \ell t^3\sqrt{c^2+d^2}}\)$, $m=4,5,6$, as $t\rightarrow \infty$ and so they may be omitted.
 \thmref{thm:To&Nicolas} gives the following  large $t>0$ asymptotics for the remaining $\phi_\lambda^{(m)}$:
 \begin{equation}\label{pfofbd6}
 \aligned
    \phi^{(1)}_\l\!\(a_{\f{k}{c^2+d^2},\ell t^3\sqrt{c^2+d^2}}\) \ \ & \sim \ \
    \exp(2\pi t^3|\ell|\sqrt{c^2+d^2} \,+\,2\pi t\,\smallf{3|\ell|^{1/3}|k|^{2/3}}{2\sqrt{c^2+d^2}}
    ) \\   \phi^{(2)}_\l\!\(a_{\f{k}{c^2+d^2},\ell t^3\sqrt{c^2+d^2}}\) \ \ & \sim \ \
    \exp(2\pi t^3|\ell|\sqrt{c^2+d^2}\,+\, 2\pi t\,\smallf{3(-1+i\sqrt{3})|\ell|^{1/3}|k|^{2/3}}{4\sqrt{c^2+d^2}})\\   \phi^{(3)}_\l\!\(a_{\f{k}{c^2+d^2}, \ell  t^3\sqrt{c^2+d^2}}\) \ \ & \sim \ \
    \exp(2\pi t^3|\ell| \sqrt{c^2+d^2}\,+\, 2\pi t\,\smallf{3(-1-i\sqrt{3})|\ell|^{1/3}|k|^{2/3}}{4\sqrt{c^2+d^2}}).
 \endaligned
 \end{equation}
In particular,   for fixed choices of $k,\ell,c$, and $d$ these expressions are linearly independent as functions of $t>0$.

Each summand for fixed $k$ and $m$ in the first line of (\ref{pfofbd5}) has growth as $t\rightarrow\infty$ given by a constant multiple of the appropriate asymptotic in (\ref{pfofbd6})  -- this is because it is impossible for $e^{2\pi i k\theta_\gamma}+e^{2\pi i (-k)\theta_\gamma}=2\cos(2\pi k\theta_\g)$ to vanish for $\ttwo abcd\in SL(2,\Z)$.\footnote{Suppose to the contrary that $\theta_\g=\smallf{ac+bd}{c^2+d^2}\in \pm \f14+\Z$; it is clear $c$ cannot be 0, for then $d=\pm1$. By adding integral multiplies of $(c,d)$ to $(a,b)$ (which does not change the coset representative of $\ttwo abcd \in \G_{\infty}^{(2)}\backslash \G^{(2)}$), we may assume $\smallf{ac+bd}{c^2+d^2}=\f ac-\f{d}{c(c^2+d^2)}=\pm \f14$, or equivalently that $4d=(c^2+d^2)(4a\mp c)$. However, $c^2+d^2>4|d|$ if $|d|>4$, in which case it certainly cannot divide $4|d|$.  Neither can $c^2+9$ divide $12$ (when $|d|=3$), nor $c^2+4$ divide 8 (when $|d|=2$), since $c$ must be odd when $d$ is even.  Is it trivial to see there are no solutions when $|d|\le 1$ and $c\neq 0$.}
 The large $t$ asymptotics of ${\mathcal M}^{\a_2}_{\text{degen}, \l}(a_{(c^2+d^2)\i,\ell t^3\sqrt{c^2+d^2}})$ are determined by (\ref{IKnuasympt}) and (\ref{Mdegendef2}) as
 a constant (depending on $c$, $d$, and $\ell$) times $t^{3(1+\l_1)/2}e^{2\pi t^3|\ell|\sqrt{c^2+d^2}}$.  Because of assumption (\ref{generalposition}), the $\l_i$ are distinct and the last line of (\ref{pfofbd5}) is asymptotic as $t\rightarrow\infty$ to $e^{2\pi t^3|\ell|\sqrt{c^2+d^2}}$ times a linear combination $t^{3/2(1+\lambda_i)}$, $i\in\{1,2,3\}$.

  We now specialize  $(c,d)$ to have   $\sqrt{c^2+d^2} > K/\ell$, so that each individual term in the finite sum (\ref{pfofbd5}) violates the $O(e^{Kt^3})$ bound on the righthand side for large $t$ and hence some cancellation must occur.  However, cancellation is impossible because of the distinct asymptotics of each summand in (\ref{pfofbd5}) (particularly because of the factor of $|k|^{2/3}t$ in the exponentials in (\ref{pfofbd6}), and the absence of such a factor in the contributions from the second line of (\ref{pfofbd5})).  We conclude that the left-hand side of (\ref{pfofbd5}) is identically zero, in particular that   (\ref{Pk000degena2}) holds for all $\ell\neq 0$ and  that
    $c(k,\ell,m)=0$ for all $k,\ell\neq0$ and $m=1,2,3$.
    Applying the same analysis using the   contragredient symmetry (\ref{contragredient}) shows $c(k,\ell,m)=0$ for $m=4,5$ as well, i.e., (\ref{Pk0ellW}) is true for all $k,\ell\neq0$, as was to be shown.
  \bx

\section{Hecke operators}\label{sec:Hecke}

A theorem of Averbuch \cite{Averbuch} asserts that Hecke eigenforms must have moderate growth (this is because the values of Hecke eigenforms at various points satisfy   recursion relations that lead to a polynomial growth estimate).   There is nevertheless a Hecke action on automorphic functions of exponential growth, which   involves increasing the growth rate.  Let us first describe this for $SL(2,\Z)$ and the Hecke operators
\begin{equation}\label{sl2hecke}
    [H_n f](z) \ \ = \ \ \sum_{\srel{ad=n}{b\imod d}}f(\smallf{az+b}{d})
\end{equation}
($H_n$ coincides with the usual Hecke operator $T_n$ up to scaling).
Although $H_n$ of course preserves $SL(2,\Z)$-invariance, it also preserves the weaker condition of periodicity in $\Re{z}$.  Thus
\begin{equation}\label{Hnonqm}
    H_n:e^{-2\pi i m z} \mapsto \sum_{d|n}e^{-2\pi i m nz/d^2}\sum_{b\,=\,1}^de^{-2\pi i m b/d} = \sum_{d|m,n}d\, e^{-2\pi i m n z/d^2}\,.
\end{equation}
From a consideration of the case of $n$ coprime to $m$ (where $d$ must equal 1), one sees the special case of Averbuch's theorem that Hecke operators do not preserve the space of periodic functions satisfying a fixed exponential growth bound.

This leads to some natural questions, in which $\mathcal O=\sum_{n=1}^\infty c_n H_n$ is taken to be a formal infinite linear combination of Hecke operators:
\begin{enumerate}
  \item Can one find coefficients $\{c_n|n\ge 1\}$ such that ${\mathcal O}j=e^{j}$, where $j$ is the classical $j$-function and the sum implicit on the left-hand side converges absolutely?
  \item Given a holomorphic weight zero modular function $f$ for $SL(2,\Z)$ such as $e^j$, can one arrange that ${\mathcal O}f$  equals some power of $j$ (in particular, could it equal $j$ itself)?   Could the sum defining $\mathcal O f$ converge absolutely?
  \item In general, given an eigenfunction $f$ of the full ring of invariant differential operators on $SL(n,\Z)\backslash SL(n,\R)/SO(n,\R)$, can one apply some convergent,  infinite linear combination of Hecke operators $\mathcal O$ to obtain a function ${\mathcal O}f$ that is bounded by some fixed exponential?
\end{enumerate}

The importance  of this last question is that  it would allow us to relax (\ref{thmcond}) in Theorem~\ref{thm:main};   it would also link the notion of weak Maass form (which assumes an exponential bound) to arbitrary eigenfunctions (without presumed bounds).  Let us thus
  formally calculate the action of $\cal O$ on an infinite polar part ${\frak P}=\sum_{m>0}e_m q^{-m}$, where $q=e^{2\pi i z}$.  In order for the sum defining ${\frak P}$ to converge, we must have that $e_m$ decays to zero faster than any decaying exponential in $m$.  Then
\begin{equation}\label{formalhecke1sl2}
\aligned
    {\cal O} {\frak {P}} \ \ & = \ \ \sum_{m,n>0} c_n e_m H_n q^{-m} \ \ = \ \ \sum_{m,n>0} c_n e_m \sum_{d|m,n} d \, q^{-mn/d^2} \\
    & = \ \
    \sum_{k>0}f_k q^{-k}\,, \ \ \ \ \text{where} \ \ \  f_k \ =    \sum_{\srel{m,n>0}{\srel{d|m,n}{mn=kd^2}}} c_n e_m d\,.
\endaligned
\end{equation}
Note that the last expression is symmetric under the interchange $c_n\leftrightarrow e_n$.
For example, consider the simplest case where $\frak{P}=\frak{P}_1=q^{-1}$, which has $e_m=\d_{m=1}$) and hence $f_k=c_k$.  In particular, only the trivial Hecke operator $T_1$ stabilizes  $\frak{P}_1$, and $\mathcal O$ is completely determined by its action on $\frak{P}_1$.
% It follows from (\ref{Hnonqm}) that  $H_n H_m \frak{P}_1=H_nq^{-m}=\sum_{d|m,n}dq^{-mn/d^2}$  and hence $H_nH_m=H_mH_n=\sum_{d|m,n}d H_{mn/d^2}$, a well-known identity satisfied by Hecke operators; this implies the composition law
%\begin{equation}\label{formalhecke2sl2}
%    {\cal O }{\cal O'} \ \ = \ \ {\cal O'}{\cal O} \ \ = \ \
%    \sum_{\srel{n,m>0}{d|m,n}}c_m c_n' d H_{mn/d^2}
%\end{equation}
%for a second operator ${\cal O}'=\sum_{n>0}c_n'H_n$.

\appendix

\begin{bibsection}
\begin{biblist}
\bib{Averbuch}{article}{
   author={Averbuch, V.},
   title={Remark on the definition of an automorphic form},
   journal={Compositio Math.},
   volume={59},
   date={1986},
   number={1},
   pages={3--13},
   issn={0010-437X},
   review={\MR{850116 (87i:11067)}},
}

\bib{Borel}{book}{
   author={Borel, Armand},
   title={Automorphic forms on ${\rm SL}\sb 2({\bf R})$},
   series={Cambridge Tracts in Mathematics},
   volume={130},
   publisher={Cambridge University Press, Cambridge},
   date={1997},
   pages={x+192},
   isbn={0-521-58049-8},
   review={\MR{1482800 (98j:11028)}},
   doi={10.1017/CBO9780511896064},
}

\bib{Bump}{book}{
   author={Bump, Daniel},
   title={Automorphic forms on ${\rm GL}(3,{\bf R})$},
   series={Lecture Notes in Mathematics},
   volume={1083},
   publisher={Springer-Verlag, Berlin},
   date={1984},
   pages={xi+184},
   isbn={3-540-13864-1},
   review={\MR{765698 (86g:11028)}},
}

\bib{BumpHuntley}{article}{
   author={Bump, Daniel},
   author={Huntley, Jonathan},
   title={Unramified Whittaker functions for ${\rm GL}(3,{\bf R})$},
   journal={J. Anal. Math.},
   volume={65},
   date={1995},
   pages={19--44},
   issn={0021-7670},
   review={\MR{1335367 (96e:22016)}},
   doi={10.1007/BF02788764},
}

\bib{Bump-gray}{book}{
   author={Bump, Daniel},
   title={Automorphic forms and representations},
   series={Cambridge Studies in Advanced Mathematics},
   volume={55},
   publisher={Cambridge University Press, Cambridge},
   date={1997},
   pages={xiv+574},
   isbn={0-521-55098-X},
   review={\MR{1431508 (97k:11080)}},
   doi={10.1017/CBO9780511609572},
}

\bib{DLMF}{book}{title={Digital Library of Mathematical Functions}, note={\url{http://dlmf.nist.gov/10.32}}}

\bib{Givental}{article}{
   author={Givental, Alexander},
   title={Stationary phase integrals, quantum Toda lattices, flag manifolds
   and the mirror conjecture},
   conference={
      title={Topics in singularity theory},
   },
   book={
      series={Amer. Math. Soc. Transl. Ser. 2},
      volume={180},
      publisher={Amer. Math. Soc., Providence, RI},
   },
   date={1997},
   pages={103--115},
   review={\MR{1767115 (2001d:14063)}},
}

\bib{Gotzky}{article}{
   author={G{\"o}tzky, Fritz},
   title={\"Uber eine zahlentheoretische Anwendung von Modulfunktionen
   zweier Ver\"anderlicher},
   language={German},
   journal={Math. Ann.},
   volume={100},
   date={1928},
   number={1},
   pages={411--437},
   issn={0025-5831},
   review={\MR{1512493}},
   doi={10.1007/BF01448854},
}

\bib{ishiistade}{article}{
   author={Ishii, Taku},
   author={Stade, Eric},
   title={New formulas for Whittaker functions on ${\rm GL}(n,\Bbb R)$},
   journal={J. Funct. Anal.},
   volume={244},
   date={2007},
   number={1},
   pages={289--314},
   issn={0022-1236},
   review={\MR{2294485 (2008f:33022)}},
   doi={10.1016/j.jfa.2006.12.004},
}

\bib{Kocher}{article}{
   author={Koecher, Max},
   title={Zur Theorie der Modulformen $n$-ten Grades. I},
   language={German},
   journal={Math. Z.},
   volume={59},
   date={1954},
   pages={399--416},
   issn={0025-5874},
   review={\MR{0059946 (15,603e)}},
}

\bib{MW}{article}{
   author={Miatello, R.},
   author={Wallach, N. R.},
   title={Automorphic forms constructed from Whittaker vectors},
   journal={J. Funct. Anal.},
   volume={86},
   date={1989},
   number={2},
   pages={411--487},
   issn={0022-1236},
   review={\MR{1021143 (91b:11065)}},
   doi={10.1016/0022-1236(89)90059-1},
}
%
%\bib{MSahi}{article}{
%   author={Miller, Stephen D.},
%   author={Sahi, Siddhartha},
%   title={Fourier coefficients of automorphic forms, character variety
%   orbits, and small representations},
%   journal={J. Number Theory},
%   volume={132},
%   date={2012},
%   number={12},
%   pages={3070--3108},
%   issn={0022-314X},
%   review={\MR{2965210}},
%   doi={10.1016/j.jnt.2012.05.032},
%}

\bib{psmult}{article}{
   author={Piatetski-Shapiro, I. I.},
   title={Multiplicity one theorems},
   conference={
      title={Automorphic forms, representations and $L$-functions (Proc.
      Sympos. Pure Math., Oregon State Univ., Corvallis, Ore., 1977), Part
      1},
   },
   book={
      series={Proc. Sympos. Pure Math., XXXIII},
      publisher={Amer. Math. Soc.},
      place={Providence, R.I.},
   },
   date={1979},
   pages={209--212},
   review={\MR{546599 (81m:22027)}},
}

\bib{shalika}{article}{
   author={Shalika, J. A.},
   title={The multiplicity one theorem for ${\rm GL}\sb{n}$},
   journal={Ann. of Math. (2)},
   volume={100},
   date={1974},
   pages={171--193},
   issn={0003-486X},
   review={\MR{0348047 (50 \#545)}},
}

\bib{stade}{article}{
   author={Stade, Eric},
   title={The reciprocal of the beta function and ${\rm GL}(n,{\bf R})$
   Whittaker functions},
   language={English, with English and French summaries},
   journal={Ann. Inst. Fourier (Grenoble)},
   volume={44},
   date={1994},
   number={1},
   pages={93--108},
   issn={0373-0956},
   review={\MR{1262881 (96h:11046)}},
}

\bib{Templier}{article}{author={Templier, Nicolas}, title={private communication}}

\bib{To}{article}{
   author={To, Tze-Ming},
   title={Asymptotic expansions of matrix coefficients of Whittaker vectors
   at irregular singularities},
   journal={Acta Math.},
   volume={175},
   date={1995},
   number={2},
   pages={227--271},
   issn={0001-5962},
   review={\MR{1368248 (97d:22017)}},
   doi={10.1007/BF02393306},
}

\bib{Trinh}{thesis}{author={Trinh, Tien D.}, title={Rutgers University Ph.D. dissertation (2016).}}

\bib{VinoTakh}{article}{
   author={Vinogradov, A. I.},
   author={Tahtad{\v{z}}jan, L. A.},
   title={Theory of the Eisenstein series for the group ${\rm SL}(3,\,{\bf
   R})$\ and its application to a binary problem. I. Fourier expansion of
   the highest Eisenstein series},
   language={Russian},
   note={Analytic number theory and the theory of functions},
   journal={Zap. Nauchn. Sem. Leningrad. Otdel. Mat. Inst. Steklov. (LOMI)},
   volume={76},
   date={1978},
   pages={5--52, 216},
   issn={0207-6772},
   review={\MR{527787 (81m:10061)}},
}

\end{biblist}
\end{bibsection}

\end{document}